
\documentclass[12pt,twoside, a4paper]{article}

\usepackage{lmodern}
\usepackage{latexsym}
\usepackage[english]{babel}
\usepackage{mathrsfs}
\usepackage{ifthen}
\usepackage{url}
\usepackage{epsfig}
\usepackage{amsbsy}
\usepackage{extarrows}
\usepackage{amsfonts}
\usepackage{amsmath}
\usepackage{bbm}
\usepackage{amssymb}
\usepackage{amsthm}
\usepackage{amsxtra}
\usepackage{bbm}
\usepackage{color}
\usepackage{stmaryrd}
\usepackage[colorlinks=true]{hyperref}




\usepackage[margin=2cm]{geometry}

\usepackage{verbatim}


\usepackage{hyperref} 

\numberwithin{equation}{section}
\numberwithin{figure}{section}




\newtheoremstyle{thm-style-oskari}
{7pt}      
{7pt}      
{\itshape} 
{}         
{\scshape} 
{.}        
{.5em}     
{}         

\theoremstyle{thm-style-oskari}
    \newtheorem{theorem}{Theorem}[section]
    \newtheorem{proposition}[theorem]{Proposition}
    \newtheorem{corollary}[theorem]{Corollary}
    \newtheorem{lemma}[theorem]{Lemma}
    \newtheorem{assumption}[theorem]{Assumption} 
    \newtheorem{definition}[theorem]{Definition}
    
    \newtheorem{convention}[theorem]{Convention}
    \newtheorem{remark}[theorem]{Remark}



\newcommand{\bels}[2] {
        \begin{equation} \label{#1} \begin{split} 
                #2 
        \end{split} \end{equation}
        }
\newcommand{\bes}[1]{
        \begin{equation*}  \begin{split} 
                #1 
        \end{split} \end{equation*}
        }




\definecolor{olivegreen}{rgb}{0,0.6,0.1}



\newcommand{\bs}[1]{\boldsymbol{\mathrm{#1}}} 
\renewcommand{\cal}{\mathcal} 
\newcommand{\scr}{\mathscr} 
 
\newcommand{\ol}[1]{\overline{#1} \!\,} 

\newcommand{\eps}{\epsilon}

\renewcommand{\d}{\partial}

\renewcommand{\P}{\mathbb{P}}
\newcommand{\E}{\mathbb{E}}
\newcommand{\R}{\mathbb{R}}
\newcommand{\C}{\mathbb{C}}
\newcommand{\N}{\mathbb{N}}


\newcommand{\ee}{\mathrm{e}} 
\newcommand{\ii}{\mathrm{i}} 
\newcommand{\dd}{\mathrm{d}}


\newcommand{\pbb}[1]{\biggl({#1}\biggr)}

\newcommand{\cbb}[1]{\biggl\{{#1}\biggr\}}

\newcommand{\abs}[1]{\lvert #1 \rvert}

\newcommand{\norm}[1]{\lVert #1 \rVert}
\newcommand{\normb}[1]{\big\lVert #1 \big\rVert}

\newcommand{\normbb}[1]{\bigg\lVert #1 \bigg\rVert}

\newcommand{\avg}[1]{\langle #1 \rangle}

\newcommand{\scalar}[2]{\langle{#1} \mspace{2mu}, {#2}\rangle}






\DeclareMathOperator{\diag}{diag}
\DeclareMathOperator{\tr}{Tr}

\DeclareMathOperator{\supp}{supp}

\DeclareMathOperator{\dist} {dist}                
\DeclareMathOperator*{\spec}{Spec}						

\newcommand{\1} {\mspace{1 mu}}
\newcommand{\2} {\mspace{2 mu}}





\newcommand{\mtwo}[2]
{
\left(
\begin{array}{cc}
#1 
\\
#2
\end{array}
\right)
}
\newcommand{\mfour}[4]
{
\left(
\begin{array}{cccc}
#1 
\\
#2
\\
#3
\\
#4
\end{array}
\right)
}
\newcommand{\vtwo}[2]
{
\left(
\begin{array}{c}
#1 
\\
#2
\end{array}
\right)
}


\begin{document}
\title{Power law decay for systems of randomly coupled differential equations}
\author{
\begin{minipage}{0.3\textwidth}
\begin{center}
L\'aszl\'o Erd{\H o}s\footnotemark[1]  \\
\footnotesize {IST Austria}\\
{\url{lerdos@ist.ac.at}}
\end{center}
\end{minipage}
\begin{minipage}{0.3\textwidth}
 \begin{center}
Torben Kr\"uger\footnotemark[1] \\
\footnotesize 
{IST Austria}\\
{\url{torben.krueger@ist.ac.at}}
\end{center}
\end{minipage}
\begin{minipage}{0.3\textwidth}
 \begin{center}
David Renfrew\footnotemark[2] \\
\footnotesize 
{IST Austria}\\
{\url{david.renfrew@ist.ac.at}}
\end{center}
\end{minipage}
}

\footnotetext[1]{Partially supported by ERC Advanced Grant RANMAT No.\ 338804}
\footnotetext[2]{Supported by Austrian Science Fund (FWF): M2080-N35}

\date{\today}

\maketitle
\thispagestyle{empty} 

\begin{abstract}
We consider large random matrices $X$ with centered, independent entries but possibly different variances. We compute the normalized trace of $f(X) g(X^*)$ for $f,g$ functions analytic on the spectrum of $X$. We use these results to compute the long time asymptotics for systems of coupled differential equations with random coefficients. We show that when the coupling is critical the norm squared of the solution decays like $t^{-1/2}$.
\end{abstract}

\noindent
{\bf Keywords:}  Non-Hermitian random matrix, time evolution of neural networks, autocorrelation function.\\
{\bf AMS Subject Classification (2010):} \texttt{60B20}, \texttt{15B52}.

\section{Introduction}

We consider the linear differential equation 
\begin{equation} \label{diffeq}
\partial_t u_t \,=\,  -u_t + g Xu_t
\end{equation}
with $u_t$ an $N$-dimensional vector, $X$ a random matrix with independent entries, and $g$ a positive coupling constant. We treat centered random matrices whose entries may have different variances. This differential equation is solved by exponentiating the random matrix $X$, thus properties of the solution can be computed in terms of functions of $X$. Existing theoretical analysis often assumes matrices have independent and identically distributed (i.i.d.) entries as many results are more readily available in this special case. Equation \eqref{diffeq} has been used for modeling in theoretical neuroscience and mathematical ecology; see, for instance, \cite{SCS,RaAb,ASS,allesina2015predicting,Allesina:2015ux,may1972will}. In both situations, however, the assumption that the entries have identical distributions is not realistic, as it does not allow for spatial information or for different types of species/cells. 

For $g$ large or small the solution exponentially grows or decays, respectively. But there is a critical $g$, where the system exhibits power law decay. We show the decay rate of the solution is universal; it does not depend on the details of the distribution of $X$. In fact, the norm squared of the solution decays like $t^{-1/2}$. In some applications, it is natural  instead to consider $X$ with Hermitian symmetry. In this case, the order of decay is instead $t^{-3/2}$; the slower decay in the non-Hermitian case is a signal of non-normal amplification of transients \cite{nonnormal}. In the neuroscience literature there are many proposals for how certain systems become critically tuned; see, for instance, \cite{HENNEQUIN20141394,lim2013balanced,MacNeil}. 

In the theory of random matrices one is often led to consider the empirical spectral distribution (ESD) of an $N \times N$ matrix, $X$, defined by 
\[ \mu_X := \frac{1}{N} \sum_{i=1}^N \delta_{\lambda_i} \]
where $\lambda_i$ are the eigenvalues of $X$. For random matrices with i.i.d. entries and no Hermitian symmetry, it is well known the ESD converges to a deterministic limit known as the circular law. We refer the reader to \cite{bordenave2012} and references within for an overview. 

For functions, $f$, which are analytic on the spectrum of $X$, $f(X)$ can be defined by Taylor expansion. Then knowledge of the ESD of $X$ allows one to compute traces of functions of $X$ via the formula 
\begin{equation} \label{spectral} \frac{1}{N} \tr f(X) = \int f(x)  \mu_X( \dd x).\end{equation} 

In this non-Hermitian setting, it is of practical and theoretical interest to study functions that depend not just on $X$ but also on $X^*$. Due to the lack of the spectral theorem, no analogue to \eqref{spectral} depending just on the eigenvalues exists for computing traces of general functions of $X$ and $X^*$. In this note, we consider functions $f$ and $g$ which are analytic on a neighborhood of the spectrum of $X$ and $X^*$, respectively, and compute the normalized trace of the product $f(X)g(X^*)$. 

In \cite{CM,MC} random matrices with i.i.d. Gaussian entries are considered, and the expectation of $N^{-1} \tr f(X)g(X^*)$ is computed by diagrammatic techniques with uncontrolled error terms. The computation relies on expressing the expectation of $\tr f(X)g(X^*)$ in terms of the {\it overlap function} of the left and right eigenvectors. This overlap carries delicate information on correlations between eigenvalues. Some of the questions addressed here, in this paper, are considered in the special case of rank 1 variance profiles in \cite{PhysRevE.91.012820}, where techniques similar to those in \cite{CM,MC} were used.

In our analysis, we give mathematically rigorous proofs with high probability error bounds for the random variable $N^{-1} \tr f(X)g(X^*)$, not just its expectation. Rather than treating the eigenvalue overlaps directly, we develop a method for expressing such quantities in terms of a resolvent of the {\it linearization matrix}, evaluated outside the spectrum, where the expressions are simpler and much more stable.

To these matrices we associate a variance profile matrix $S$, whose entry $s_{ij}= \E[|x_{ij}|^2]$ is the variance of the $(i,j)$-entry of $X$. Matrices with general variance profiles were first studied by Girko with the {\it canonical equation of type $K_{25}$} in \cite{girko2012theory}, but the argument was considered incomplete \cite{bai1997}. In the i.i.d. case, a rigorous proof of the convergence of the ESD was given by Bai in \cite{bai1997}, under certain technical assumptions. This work was recently extended in \cite{AEK-circ} and \cite{CHNR} to include local spectral scales and very general variance profiles, respectively. The ESD of such matrices converges to a deterministic measure which is radially symmetric around origin and supported on a disk with radius given by the square root of the spectral radius of the variance profile. 

We use Cauchy?s theorem to reduce the computation of $Tr(f(X)g(X^*)$ to that of computing traces of products of resolvents, $(X-\zeta_1)^{-1}(X^{*}-\ol{\zeta_2})^{-1}$. We introduce a block matrix with an additional parameter whose blocks are linear in $(X-\zeta_i)$. By differentiating in this additional parameter the product of resolvents is expressed in terms of the entries of the resolvent of the linearized matrix. The resolvent of the linearized matrix is then studied with the matrix Dyson equation \cite{AjankiCorrelated}. Our formulas for functions of $X$ and $X^*$ are then applied to compute the long time asymptotics of the norm and autocorrelation of solutions to the differential equations introduced at the beginning of this paper.

\section{Assumptions and Main Results}

Let $X$ be a random $N \times N$ matrix with independent complex entries such that the distribution of the $(i,j)$-entry has zero mean and variance $s_{ij}$:
\[ \E[x_{ij}] = 0, \quad s_{ij} := \E[|x_{ij}|^2] .\]
We call the $N\times N$ matrix $S$ whose $(i,j)$-entry is $s_{ij}$ the {\it variance profile} of $X$. 

\begin{assumption} \label{assum:randmat}
The distribution of the entries of the matrix $X$ satisfies the following:
\begin{enumerate}
\item[(1)] \label{assum:boundvar} (Uniform primitivity) There is a constant $\kappa_1 > 0$ and an integer $K$ such that 
\[    {\left[S^K\right]_{ij}  \geq \frac{\kappa_1}{N},\qquad \left[(S^* S)^K\right]_{ij} \geq \frac{\kappa_1}{N}   } \]
for all $1\leq i,j \leq N$.
\item[(2)]  \label{assum:specrad} (Normalization) 
The spectral radius of $S$  is normalized to one; $\rho(S) = 1$.
\item[(3)]  \label{assum:boundmoments} (Bounded moments) For each $p \in \N$, there exists a $\varphi_p > 0$ such that 
\[ \E[|x_{ij}|^p] \leq \varphi_p N^{-p/2} \]
for all $1\leq i,j \leq N$. 
\end{enumerate}
\end{assumption}

We will consider $\kappa_1, K $, and $\varphi_p$ model parameters. Our estimates will be uniform in all models 
 possessing the same model parameters. In particular, they are uniform in the dimension,~$N$, of the random matrix.

Under Assumption \ref{assum:randmat} the Perron-Frobenius theorem implies that $\rho(S)$ is the largest eigenvalue in magnitude of $S$ and its associated eigenvector has positive entries. We label this eigenvector $v_r$. Similarly, we denote by $v_l$ the Perron-Frobenius eigenvector of $S^*$, i.e.
\[ S v_r = v_r, \qquad S^* v_l = v_l .\]

\begin{remark}
In item (2) the normalization $\rho(S)=1$ can be replaced with any constant independent of $N$. Item (3) can be replaced with $\sqrt{N} x_{ij} $ having finite but sufficiently large number of moments. See Remark \ref{Seigenvector} for comments on item (1).  
\end{remark}

To the variance profile $(s_{ij})$ we associate an operator $\scr{S}$ and its adjoint $\scr{S}^*$  mapping $N \times N$ matrices to $N \times N$ matrices given by
\begin{equation}
\label{defS}
 \scr{S}[T]_{ij}\,:=   \delta_{ij} \sum_{k=1}^N s_{ik} t_{kk}, \quad\qquad
  \scr{S}^*[T]_{ij}\,:= \delta_{ij} \sum_{k=1}^N s_{ki} t_{kk}  .
\end{equation}

Equivalently, one could define $\scr{S}$ and $\scr{S}^*$ by $\scr{S}[T] = \E\2[XTX^*]$ and $\scr{S}^*[T] =\,\E\2[X^*TX]$. Here $\scr{S}^*$ is the adjoint of $\scr{S}$ with respect to the natural scalar product $(A, B)\mapsto \tr(A^*B)$ on $N\times N$ matrices. These operators will appear in the matrix Dyson equation \eqref{eq:MDE}.

We denote by $\tr_N= \frac{1}{N} \tr$ the normalized trace. We use $\langle x, y \rangle = \frac{1}{N} \sum_{i} \ol{x}_i y_i$ to denote the standard normalized inner product on $\C^N$ 
and $\langle x \rangle = \langle \mathbbm{1}, x\rangle$ to denote the average of a vector, where $\mathbbm{1}$ is the constant $N$-dimensional vector with every entry equal to one. Our main theorem computes the limit of the normalized trace of products of analytic functions of $X$ and $X^*$ as $N \to \infty$.

\begin{theorem} \label{analyticfunc}
Let $X$ satisfy Assumption \ref{assum:randmat} and $f,g$ be analytic functions on a neighborhood of the closed disk with radius $1+\mu$ centered at the origin for some $\mu>0$. 
Let $\gamma$ be a positively oriented circle, centered at the origin, with radius $1+\mu$. Then there exists a universal constant $\xi$ such that 
\[  \P\left( \left|  \tr_N f(X) g(X^*) - \left( \frac{1}{2 \pi \ii} \right)^2 \oint_{\gamma} d\zeta_1 \oint_{\ol{\gamma}}  d\ol\zeta_2  \,  f(\zeta_1)g(\overline \zeta_2)  \langle (\zeta_1 \ol\zeta_2 - S )^{-1} \mathbbm{1} \rangle \right|   \leq   \frac{1}{N^{\xi}} \right)  \geq 1 - \frac{C_D}{N^D}\] 
for any $D$ and some positive constant $C_D$. The constant $C_D$ depends on depends on the model parameters, the maximum of $f,g$ on $\gamma$, and $\mu$. 
Here $\ol{\gamma}$ traces the same circle as $\gamma$ but is negatively oriented.
\end{theorem}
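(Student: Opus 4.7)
The plan is to reduce $\tr_N f(X)g(X^*)$ to a contour integral of $\tr_N(\zeta_1-X)^{-1}(\ol\zeta_2-X^*)^{-1}$ via Cauchy's formula, then access this product of resolvents through a single $2N\times 2N$ linearization, and finally apply the matrix Dyson equation (MDE) framework of \cite{AjankiCorrelated}.

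\emph{Cauchy reduction and linearization.} Since $f,g$ are analytic on a neighborhood of the disk of radius $1+\mu$ and, by the variance-profile circular law of \cite{AEK-circ,CHNR}, the spectra of $X$ and $X^*$ both lie inside the disk of radius $1+\mu/2$ with probability $1-C_D N^{-D}$ for any $D$, the holomorphic functional calculus gives on that event
\[ \tr_N f(X)g(X^*) = \frac{1}{(2\pi\ii)^2}\oint_\gamma\!\oint_{\ol\gamma} f(\zeta_1)g(\ol\zeta_2)\,\tr_N\!\bigl[(\zeta_1-X)^{-1}(\ol\zeta_2-X^*)^{-1}\bigr]\,d\zeta_1\,d\ol\zeta_2. \]
It therefore suffices to prove, uniformly in $(\zeta_1,\ol\zeta_2)\in\gamma\times\ol\gamma$, that $\tr_N[(\zeta_1-X)^{-1}(\ol\zeta_2-X^*)^{-1}] = \avg{(\zeta_1\ol\zeta_2-S)^{-1}\mathbbm{1}} + O(N^{-\xi})$ with high probability. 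To access this product of resolvents I introduce the linearization
\[ L(\eta) = \begin{pmatrix} \eta I & \zeta_1 I - X \\ \ol\zeta_2 I - X^* & 0 \end{pmatrix}, \]
whose $(2,2)$-block, by Schur complement, is exactly $[L(\eta)^{-1}]_{22} = -\eta(\zeta_1-X)^{-1}(\ol\zeta_2-X^*)^{-1}$; hence the desired trace equals $\tr_N[L(\eta)^{-1}]_{22}/(-\eta)$ for any $\eta\ne 0$, or equivalently $-\partial_\eta\tr_N[L(\eta)^{-1}]_{22}\big|_{\eta=0}$. This extracts the product of resolvents from a single block of the inverse of a matrix that depends affinely on the independent entries of $X$.

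\emph{Matrix Dyson equation.} The MDE framework produces a deterministic $M(\eta)$ that approximates $L(\eta)^{-1}$ entrywise with high probability and solves a deterministic matrix equation whose nonrandom inputs are $\E L(\eta)$ together with a covariance superoperator $\scr{K}$ on $2N\times 2N$ block matrices induced by $\scr{S}$ and $\scr{S}^*$ of \eqref{defS} and respecting the $(1,2)$ and $(2,1)$ block placement of $X$ and $X^*$. Writing $M(\eta)$ in block form, eliminating the off-diagonal blocks by a Schur complement at the MDE level, and expanding for small $\eta$ reduces the fixed-point problem for the diagonal of $M_{22}(\eta)$ to a linear system involving $(\zeta_1\ol\zeta_2-S)^{-1}$; a direct calculation then identifies $\tr_N M_{22}(\eta)/(-\eta) \to \avg{(\zeta_1\ol\zeta_2-S)^{-1}\mathbbm{1}}$ as $\eta\to 0$. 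Combined with the local law for $L(\eta)^{-1}$ and integration against $f(\zeta_1)g(\ol\zeta_2)$ over the compact contours $\gamma\times\ol\gamma$, this yields the theorem.

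\emph{Main obstacle.} The critical step is proving a local law for $L(\eta)^{-1}$ with a stability bound on the MDE that survives the $\eta\to 0$ limit: at $\eta=0$ the deterministic problem degenerates in a structured way, and the stability operator governing the local law must be inverted with a bound uniform for $\eta$ in a small neighborhood of zero, so that the division by $-\eta$ (equivalently, the differentiation) preserves the $N^{-\xi}$ error. This stability relies on the spectral separation $(1+\mu)^2>\rho(S)=1$ together with the uniform primitivity of $S$ from Assumption~\ref{assum:randmat}(1), which provide a strictly positive Perron-Frobenius eigenvector $v_r$ bounded away from the boundary of the positive cone; converting this gap into a quantitative inverse bound on the stability operator of the block MDE, uniformly for $(\zeta_1,\ol\zeta_2)\in\gamma\times\ol\gamma$, is the technical heart of the argument.
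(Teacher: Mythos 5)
Your global skeleton matches the paper's: reduce $\tr_N f(X)g(X^*)$ by Cauchy's formula to a double contour integral of $\tr_N[(X-\zeta_1)^{-1}(X^*-\ol\zeta_2)^{-1}]$, access this product of resolvents as a single block entry of the inverse of a $2N\times 2N$ linearization affine in $X$, and approximate that inverse deterministically via a matrix Dyson equation. The formula you derive for the $(2,2)$-block of $L(\eta)^{-1}$ is also exactly the identity the paper exploits (up to a permutation of block rows/columns and a sign: the paper's $L^\zeta_\alpha$ from \eqref{defL} gives $\alpha(X-\zeta_1)^{-1}(X^*-\ol\zeta_2)^{-1}$ in the $(1,1)$-block of $(L^\zeta_\alpha)^{-1}$).

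The genuine gap is that you propose to apply the MDE/local-law framework of \cite{AjankiCorrelated} directly to $L(\eta)^{-1}$, but $L(\eta)$ is \emph{not} Hermitian: for this you would need $(\zeta_1 I-X)^*=\ol\zeta_2 I-X^*$, i.e.\ $\zeta_1=\zeta_2$, which fails over the double contour. The MDE machinery you invoke produces entrywise control of resolvents $(H-z)^{-1}$ only for Hermitian $H$ and $\Im z>0$, and both the existence/uniqueness theory (positive-imaginary-part solution, \cite{Helton01012007}) and the stability analysis depend on Hermiticity. There is no off-the-shelf "MDE for the inverse of a non-Hermitian matrix," precisely because perturbative control of such inverses is unstable. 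The paper's key additional step, which your proposal omits, is Girko's Hermitization: it forms the $4N\times 4N$ Hermitian matrix $\bs{H}^\zeta_\alpha$ in \eqref{defH} built from $L^\zeta_\alpha$ and its adjoint, and studies the resolvent $\bs{G}^\zeta_\alpha(z)=(\bs{H}^\zeta_\alpha-z)^{-1}$ rather than $(L^\zeta_\alpha)^{-1}$ directly. The block entry $\bs{E}_3^*\bs{G}^\zeta_\alpha(0)\bs{E}_1$ then recovers the desired $(1,1)$-block of $(L^\zeta_\alpha)^{-1}$ at $z=0$, and the Hermitian MDE and local law of \cite{AEKN} apply to $\bs{H}^\zeta_\alpha$. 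To make your route rigorous you would have to insert the same Hermitization, at which point the necessary ingredients — exact solvability of the $4N\times 4N$ MDE at $\alpha=z=0$, a stability bound of order $\Delta_\zeta^{-1}$, a spectral gap of $\bs{H}^\zeta_\alpha$ around zero of order $\Delta_\zeta^2$ so the local law at $z=0$ is usable, and the final optimization choosing $\alpha=N^{-c'}$ to balance the $O(\alpha^{-1}N^{-1+\epsilon})$ local-law error against the $O(\alpha)$ linearization error — become exactly the paper's Lemmas \ref{lem:linear}--\ref{lem:derivalpha} and Lemma \ref{lemma:noeigs}.

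A smaller point: you invoke the circular law of \cite{AEK-circ,CHNR} to conclude the spectrum of $X$ lies inside a disk of radius $1+\mu/2$ with high probability. Weak convergence of the ESD alone does not give this; one needs a quantitative absence-of-outliers statement, which those references do prove, but the paper prefers to obtain it internally via Corollary \ref{corr:resolventbound} as a byproduct of the spectral gap of $\bs{H}^\zeta_\alpha$ established in Lemma \ref{lemma:noeigs}.
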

Here and in what follows we will typically omit the identity matrix or operator when multiplied by a complex number. Theorem \ref{analyticfunc} is proved after the statement of Theorem \ref{thm:resolventconv}.

\begin{remark} \label{Seigenvector}
It will be clear from the proofs of Theorems \ref{analyticfunc} and \ref{thm:resolventconv} that these results also hold \ if 
the primitivity condition (1) is replaced with the assumption that the matrix $S$ is irreducible, meaning it cannot be brought into block upper-triangular form by conjugation with a permutation matrix, and that there is a $k>0$ such that the entries of the Perron-Frobenius eigenvectors satisfy
\[ (v_l)_i, (v_r)_i > k \quad \text{ for all } 1\leq i \leq N, \]
uniformly in $N$. In this case, $k$ is a model parameter. It is easy to check that this bound holds for $X$ satisfying Assumption~\ref{assum:randmat}.
\end{remark}

\begin{remark}
In the case $s_{ij} =1/N$ for all $i,j$, the integral kernel simplifies to $ \langle (\zeta_1 \ol\zeta_2 - S )^{-1} \mathbbm{1} \rangle = (\zeta_1 \ol\zeta_2 - 1 )^{-1}$, recovering the formula computed in \cite{CM}, \cite{MC} in the Gaussian case. 
\end{remark}

In Section \ref{sec:mainresult}, we use Theorem \ref{analyticfunc} to compute the long time asymptotics of systems of linear ODEs, coupled by a random matrix, when averaged over initial conditions. This system of ODEs is a popular model in theoretical neuroscience; see, for instance, \cite{SCS,RaAb,ASS}.

\begin{theorem} \label{thm:heat}
Let $X$ satisfy Assumption \ref{assum:randmat} and let $u_t \in \C^{N}$ solve the linear ODE
\begin{equation} \label{ODEintial}
\partial_t u_t \,=\,  -u_t + g Xu_t
\end{equation}
with non-Hermitian coefficient matrix $X$, coupling coefficient $0 < g \leq 1$, and initial value $u_0$ distributed uniformly on the $N$-dimensional unit sphere, $\{u : \|u\|=1\} \subset \C^N $. Then there exist universal constants $\xi$ and $\delta$ as well as constants $c,C>0$ such that
\[  \P\left( \left| \E_{u_0} \|u_t\|^2 - \frac{\langle v_l \rangle\langle v_r \rangle}{\langle v_l, v_r \rangle} J_0(2 \ii g t) e^{-2t}  \right| \leq N^{-\xi} + C \ee^{-(2(1 -  g) +cg)t} : \forall t \leq N^{\delta} \right) \geq 1 - \frac{C_D}{N^{D}}\]
for any 
$D>0$ and some constant $C_D$. The function $J_0$ is the Bessel function of the first kind. Here $c,C$ depend only on the model parameters, $\E_{u_0}$ denotes expectation with respect to the initial condition, and $\P$ is the probability with respect to $X$. 

\end{theorem}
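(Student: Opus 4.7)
The plan is to convert $\E_{u_0}\|u_t\|^2$ into a normalized trace evaluable by Theorem~\ref{analyticfunc} and then compute the resulting double contour integral. Because $-I$ commutes with $X$, the ODE \eqref{ODEintial} has solution $u_t = \ee^{-t}\ee^{gtX}u_0$, so $\|u_t\|^2 = \ee^{-2t}\,u_0^*\ee^{gtX^*}\ee^{gtX}u_0$. By unitary invariance of the uniform measure on the unit sphere, $\E_{u_0}[u_0^* A u_0]=\tr_N A$ for any deterministic matrix $A$; combined with cyclicity this gives $\E_{u_0}\|u_t\|^2 = \ee^{-2t}\tr_N(\ee^{gtX}\ee^{gtX^*})$. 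I then apply Theorem~\ref{analyticfunc} with the entire functions $f(z)=g(z)=\ee^{gtz}$ and a contour $\gamma$ of radius $1+\mu$ for any fixed $\mu>0$, obtaining on a high-probability event
\[ \tr_N\bigl(\ee^{gtX}\ee^{gtX^*}\bigr) = \frac{1}{(2\pi\ii)^2}\oint_\gamma d\zeta_1 \oint_{\bar\gamma} d\bar\zeta_2\, \ee^{gt(\zeta_1+\bar\zeta_2)}\avg{(\zeta_1\bar\zeta_2-S)^{-1}\mathbbm{1}} + O(N^{-\xi}). \]

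I evaluate the deterministic integral using the spectral decomposition $S = \sum_j \lambda_j P_j$. The Perron--Frobenius eigenvalue $\lambda_1=1$ is simple with $P_1 = v_r v_l^*/(v_l^* v_r)$, so $\avg{P_1\mathbbm{1}} = \avg{v_l}\avg{v_r}/\avg{v_l,v_r}$; the remaining eigenvalues satisfy $|\lambda_j|\leq\rho<1$, where $\rho$ is a model parameter via Assumption~\ref{assum:randmat}(1) (cf.\ Remark~\ref{Seigenvector}). Since $|\zeta_1\bar\zeta_2|=(1+\mu)^2>1$ on the contour, I expand $(\zeta_1\bar\zeta_2-\lambda_j)^{-1}$ as a Neumann series in $(\zeta_1\bar\zeta_2)^{-1}$, swap with the integrals, and use the Cauchy identity $\frac{1}{2\pi\ii}\oint_\gamma \ee^{gtw}w^{-n-1}dw = (gt)^n/n!$. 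Up to the sign from $\bar\gamma$ being negatively oriented, the integral collapses to $\sum_j \avg{P_j\mathbbm{1}}\,I_0(2gt\sqrt{\lambda_j})$, where $I_0(z)=\sum_{n\geq 0}(z/2)^{2n}/(n!)^2$. The $j=1$ term, combined with $I_0(2gt)=J_0(2\ii gt)$, produces the announced main term after reinstating the $\ee^{-2t}$ prefactor; the $j\neq 1$ terms are each bounded by $C\,I_0(2gt\sqrt\rho)\lesssim C\ee^{2gt\sqrt\rho}$, so after multiplication by $\ee^{-2t}$ they become $C\ee^{-(2(1-g)+cg)t}$ with $c:=2(1-\sqrt\rho)>0$, matching the exponential part of the claimed error. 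Uniform control of $\avg{P_j\mathbbm{1}}$ and of the spectral projections rests on the bounded Perron--Frobenius entries recorded in Remark~\ref{Seigenvector}.

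The main technical obstacle is upgrading the pointwise-in-$t$ random error $N^{-\xi}$ to a bound valid simultaneously for all $t\leq N^\delta$. The probability constant $C_D$ in Theorem~\ref{analyticfunc} depends on $\max_\gamma|\ee^{gt\cdot}|=\ee^{gt(1+\mu)}$, which grows with $t$, and naive operator-norm control of $\ee^{gtX}$ cannot compensate because of transient non-normal amplification. I would handle this by splitting the time axis at a threshold $C_0\log N$: below the threshold a polynomial grid plus Lipschitz interpolation on the high-probability event $\|X\|\leq C$ yields uniformity via a union bound; above it, the deterministic main term is already dominated by $\ee^{-(2(1-g)+cg)t}$, and $\tr_N(\ee^{gtX}\ee^{gtX^*})$ can be compared against it directly using the uniform resolvent local laws that underpin the proof of Theorem~\ref{analyticfunc}, rather than its integrated statement. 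Balancing the two regimes determines the universal constants $\xi$ and $\delta$.
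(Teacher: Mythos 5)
Your setup and the evaluation of the leading term are essentially the paper's: $\E_{u_0}\|u_t\|^2=\tr_N[\ee^{t(gX^*-I)}\ee^{t(gX-I)}]$, Cauchy's theorem, and the Bessel identity for the Perron--Frobenius contribution are all the same. However, your treatment of the subleading contribution has a genuine gap. You decompose $\avg{(\zeta_1\bar\zeta_2-S)^{-1}\mathbbm{1}}$ via a spectral decomposition $S=\sum_j\lambda_jP_j$ and bound the non-Perron terms by $\sum_{j\neq 1}|\avg{P_j\mathbbm{1}}|\,I_0(2gt\sqrt{\lambda_j})$. This step is not justified: $S$ is a nonnegative, primitive, but generally \emph{non-normal} matrix; it need not be diagonalizable, and even when it is, the spectral projections $P_j$ are oblique and can have operator norms that grow with $N$. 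There are also $N-1$ such terms, so even a uniform bound on each $|\avg{P_j\mathbbm{1}}|$ would not close the argument. The claim "$|\lambda_j|\le\rho<1$ with $\rho$ a model parameter" is likewise not an assumption --- the spectral gap of $S$ below its Perron eigenvalue has to be \emph{proved} from primitivity. The paper circumvents all of this by using the rank-one decomposition \eqref{derivdecomp}, $(\zeta_1\bar\zeta_2-S)^{-1}\mathbbm{1}=\frac{1}{\zeta_1\bar\zeta_2-1}\frac{\avg{v_l}}{\avg{v_l,v_r}}v_r + Q(\zeta_1\bar\zeta_2-S)^{-1}Q\mathbbm{1}$, and then bounding the \emph{resolvent} $\|Q(S-z)^{-1}Q\|$ directly (Lemma \ref{lem:Sbound}) via symmetrization with the Perron eigenvector balance --- no diagonalizability or projection-norm control is ever invoked. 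The exponentially small bound comes from deforming the contour to radius $1-\epsilon$, with $\epsilon$ the model-dependent gap produced by Lemma \ref{lem:Sbound}.

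Your worry about uniformity in $t\le N^\delta$ is real, but your proposed two-regime fix is more complicated than what the paper actually does. The paper does not apply Theorem~\ref{analyticfunc} with a fixed radius $1+\mu$; it applies Theorem~\ref{thm:resolventconv}, which is uniform over $\zeta\in\Xi_\delta$, i.e.\ over contours of radius as small as $1+N^{-\delta}$. On such a contour $|\ee^{t(g\zeta_1+g\bar\zeta_2-2)}|\le \ee^{-2(1-g)t}\ee^{2gtN^{-\delta}}\le \ee^{2g}$ for all $t\le N^\delta$, so the $N^{-\xi}$ random error from Theorem~\ref{thm:resolventconv} is preserved uniformly in $t$ with no splitting of the time axis. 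If you do wish to use the integrated statement (Theorem~\ref{analyticfunc}) with fixed $\mu$, you must account for the $\ee^{gt(1+\mu)}$ blow-up you identify, and it is cleaner to simply go one level down and use the resolvent estimate directly, as the paper does.
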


\begin{remark}
In particular, in the critical $g=1$ case, using the asymptotics of the Bessel function
\[ 
\lim_{t\to\infty}\lim_{N\to\infty}\E_{u_0} \sqrt{t} \|u_t\|^2 = \frac{\langle v_l \rangle\langle v_r \rangle}{2\sqrt{\pi} \langle v_l, v_r \rangle} . 
\]
where the limit $N \to \infty$ holds in probability with respect to the randomness of $X$. 
\end{remark}

The coefficient $-1$ of the damping term in the differential equation is the negative of the square root of the Perron-Frobenius eigenvalue of $S$. Since the spectral radius of $X$ converges to $\sqrt{\rho(S)}$, the condition $0< g \leq 1$ ensures the real parts of the eigenvalues of $g X$ will be less than or equal to 1, so the differential equation is stable. The long time asymptotics of \eqref{ODEintial} for the i.i.d. model were computed in \cite{CM,MC}. Our results not only give a rigorous proof, but also prove universality. The asymptotics depend on the variance profile only through its spectral radius $\rho(S)$ and Perron-Frobenius eigenvectors.

In Section \ref{sec:mainresult} we also show that as a consequence of the square root behavior at the edge of the spectrum for Hermitian random matrices the long term asymptotics instead decay like $t^{-3/2}$.   

We also compute the autocorrelation function, 
\[ R(\tau) := \E_{B}[\langle u_t, u_{t+\tau}  \rangle ],\] averaged with respect to the Brownian motion and indices, of the solution to
\begin{equation} \label{stationary} \dd u_t \,=\,  (-u_t + gXu_t) \dd t + \dd B_t,\end{equation} with $0<g<1$ and $ B_t$ is a standard Brownian motion, independent of $X$, in stationarity. In the case that all matrix entries have the same variance this was computed in \cite{correlationsMarti}, there the case that the $(i,j)$ and $(j,i)$ entries are correlated was also considered. In the absence of these correlations, we show the autocorrelation exhibits universal decay: to leading order it depends only on $1-g$. 

\begin{theorem} \label{thm:stationary}
Let $X$ satisfy Assumption \ref{assum:randmat}. Then there exist universal constants $\xi$ and $\delta$ as well as $c,C>0$ such that the autocorrelation function of the solution
\begin{equation} \label{ODEstat}  u_t =  \int_{-\infty}^t \ee^{(-1+g X)(t-s)} \, \dd B_s   \end{equation}
to \eqref{stationary} satisfies 
\[ \P\left( \left|R(\tau) - \frac{1}{ 2\sqrt{1 -  g^2 } } \ee^{  - \tau \sqrt{1 -  g^2 }  }    \right| \leq N^{-\xi} + C  \ee^{  - \left( \sqrt{1 -  g^2 }-cg \right) \tau}  : \forall \tau \leq N^{\delta} \right) \geq 1 - \frac{C_D}{N^{D}}
\]
for any  
$D>0$ and some constant $C_D$. Here $c,C$ depends only on the model parameters, and $\P$ is probability with respect to $X$.
\end{theorem}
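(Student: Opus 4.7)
The plan is to combine the It\^o isometry with Theorem \ref{analyticfunc} to rewrite $R(\tau)$ as an explicit double contour integral and then evaluate that integral by residues. Setting $A := -1 + gX$ and using the stationary representation \eqref{ODEstat} together with the independence of the components of $B_t$, the It\^o isometry yields, pointwise in $X$,
\[ R(\tau) \;=\; \int_0^\infty \tr_N\!\pb{\ee^{A(r+\tau)}\ee^{A^*r}}\,\dd r \;=\; \int_0^\infty \ee^{-2r-\tau}\tr_N\!\pb{\ee^{gX(r+\tau)}\ee^{gX^*r}}\,\dd r . \]
I then apply Theorem \ref{analyticfunc} to the inner trace with $f(\zeta)=\ee^{g(r+\tau)\zeta}$ and $g(\zeta)=\ee^{gr\zeta}$ on the circle $\gamma$ of radius $1+\mu$, choosing $\mu>0$ small enough that $g(1+\mu)<1$. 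Interchanging the $r$-integration with the double contour integral, which is legitimate because $\re(2-g\zeta_1-g\ol\zeta_2)\ge 2-2g(1+\mu)>0$ on $\gamma\times\ol\gamma$, and computing the elementary Laplace integral $\int_0^\infty \ee^{-r(2-g\zeta_1-g\ol\zeta_2)}\dd r = (2-g\zeta_1-g\ol\zeta_2)^{-1}$, I obtain
\[ R(\tau) \;\approx\; \ee^{-\tau}\pB{\tfrac{1}{2\pi\ii}}^{2}\oint_\gamma\!\oint_{\ol\gamma}\! \frac{\ee^{g\tau\zeta_1}\,\langle(\zeta_1\ol\zeta_2-S)^{-1}\mathbbm{1}\rangle}{2-g\zeta_1-g\ol\zeta_2}\,\dd\zeta_1\dd\ol\zeta_2 . \]

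The integrand decays like $|\ol\zeta_2|^{-2}$ at infinity, so the $\ol\zeta_2$-integral is evaluated by collecting the residues at $\ol\zeta_2=\lambda_\alpha/\zeta_1$, one per eigenvalue $\lambda_\alpha$ of $S$ (these all lie inside the contour since $|\zeta_1|=1+\mu$ and $|\lambda_\alpha|\le 1$; the pole at $\ol\zeta_2=(2-g\zeta_1)/g$ lies outside by the choice of $\mu$). Uniform primitivity (Assumption \ref{assum:randmat}(1), cf.\ Remark \ref{Seigenvector}) gives a spectral gap $\max_{\alpha\neq 1}|\lambda_\alpha|\le 1-c_0$, so the Perron pole $\lambda_1=1$ dominates; its $\ol\zeta_2$-residue equals $\langle v_l\rangle\langle v_r\rangle/(\langle v_l,v_r\rangle\,\zeta_1)$ via the spectral decomposition of $(z-S)^{-1}$ near $z=1$. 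The remaining $\zeta_1$-integral has denominator $2\zeta_1-g\zeta_1^2-g$ with roots $\zeta_\pm=(1\pm\sqrt{1-g^2})/g$; since $\zeta_+\zeta_-=1$ only $\zeta_-<1$ sits inside $\gamma$, and its residue is $1/(2\sqrt{1-g^2})$, producing the leading exponential $\ee^{-\tau+g\tau\zeta_-}=\ee^{-\tau\sqrt{1-g^2}}$. The Perron prefactor $\langle v_l\rangle\langle v_r\rangle/\langle v_l,v_r\rangle$ is $O(1)$, and its deviation from $1$ together with the subdominant eigenvalue contributions of size $\ee^{-\tau\sqrt{1-g^2\lambda_\alpha}}$ (strictly faster-decaying by the gap) is absorbed into the stated error $C\ee^{-(\sqrt{1-g^2}-cg)\tau}$ once $c$ is chosen below the spectral gap of $S$.

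The main obstacle is uniform error control. The constant $C_D$ in Theorem \ref{analyticfunc} scales with $\max_\gamma|f|\max_\gamma|g|=\ee^{g(2r+\tau)(1+\mu)}$ and therefore grows exponentially in $r+\tau$, so the theorem cannot be applied uniformly on all of $[0,\infty)$. I truncate the $r$-integral at $T_0=N^\delta$ with a small $\delta>0$ chosen so that, on the high-probability event of Theorem \ref{analyticfunc}, the accumulated error on $[0,T_0]$ is still $N^{-\xi'}$ for some $\xi'>0$; this same constraint forces the window $\tau\le N^\delta$ stated in the theorem. For the tail $r>T_0$ I use the Cauchy--Schwarz bound $|\tr_N(\ee^{A(r+\tau)}\ee^{A^*r})|\le\sqrt{F(r+\tau)F(r)}$ with $F(s):=\tr_N(\ee^{As}\ee^{A^*s})$, and one further application of Theorem \ref{analyticfunc} to $F$ (via the same Bessel-function computation used in the proof of Theorem \ref{thm:heat}) gives $F(s)\le Cs^{-1/2}\ee^{-2(1-g)s}$ on the good event, whose integral from $T_0$ onward is exponentially smaller than the main term for $\tau\le N^\delta$. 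Combining these error estimates with the residue computation yields the claim on the event provided by Theorem \ref{analyticfunc}.
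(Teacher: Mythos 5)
Your overall strategy matches the paper's: apply the It\^o isometry to express $R(\tau)$ as a time integral of $\tr_N\big(\ee^{A(r+\tau)}\ee^{A^*r}\big)$, pass to a double contour representation, exchange the order of integration, and extract the Perron--Frobenius pole. The leading-order residue computation giving $\frac{1}{2\sqrt{1-g^2}}\ee^{-\tau\sqrt{1-g^2}}$ is identical, and you correctly note that the Perron prefactor $\avg{v_l}\avg{v_r}/\scalar{v_l}{v_r}$ must be pulled out and its deviation from $1$ absorbed into the error term, a point the paper glosses over.

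There are two places where the argument as written does not close, though both are repairable.

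First, the treatment of the subdominant contributions by residues at the eigenvalues $\lambda_\alpha$ of $S$ is not rigorous for the non-normal matrix $S$: the residue at $\lambda_\alpha$ is the associated spectral projector (or a Jordan-chain nilpotent), whose norm is not controlled by the assumptions and can be enormous for near-defective $S$, so ``contributions of size $\ee^{-\tau\sqrt{1-g^2\lambda_\alpha}}$'' is not a valid bound on its own. The paper avoids this by using the decomposition \eqref{derivdecomp} together with Lemma \ref{lem:Sbound}, which supplies a \emph{uniform} bound $\|Q(S-z)^{-1}Q\|\lesssim 1$ for $|z|\ge 1-2\eps$, and then deforms both contours inward to radius $1-\eps$, avoiding residue extraction for the non-Perron part entirely. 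You need this (or an equivalent resolvent bound) rather than the heuristic residue count. Related, the spectral-gap statement $\max_{\alpha\neq 1}|\lambda_\alpha|\le 1-c_0$ is exactly the content of Lemma \ref{lem:Sbound}, not something that follows directly from Assumption \ref{assum:randmat} without that appendix lemma.

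Second, the truncation-plus-Cauchy--Schwarz machinery is a detour that does not quite work as written and is, in fact, unnecessary. The high-probability event in Theorem \ref{analyticfunc} is inherited from Theorem \ref{thm:resolventconv} and does not depend on $f,g$; what scales with $\max_\gamma|f|\max_\gamma|g|$ is the \emph{error magnitude}, not the probability constant. Once you include the damping in $f,g$ (so $\max_\gamma|f_r|\max_\gamma|g_r|\le \ee^{-(2-2g(1+\mu))r-(1-g(1+\mu))\tau}$), the pointwise error is already integrable in $r$ and produces a total error $O(N^{-\xi})$ with no truncation needed. As currently written, your tail bound $F(s)\le Cs^{-1/2}\ee^{-2(1-g)s}$ obtained from Theorem \ref{analyticfunc} also carries the additive $N^{-\xi}$ error, so the tail integral $\int_{T_0}^\infty\sqrt{F(r+\tau)F(r)}\,\dd r$ is not finite unless you instead use the exact contour representation together with the deterministic resolvent bound from Corollary \ref{corr:resolventbound} to get $F(s)\lesssim\ee^{-2(1-g(1+\mu))s}$. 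The cleanest route, which is implicitly what the paper does, is to interchange the $u$-integral and the contour integrals first and then apply the uniform bound of Theorem \ref{thm:resolventconv}.
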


The computation for general analytic functions in Theorem \ref{analyticfunc} will follow from Cauchy's theorem and Theorem \ref{thm:resolventconv} below concerning the behavior of the trace of the products of two resolvents at spectral parameters $\zeta_1,\zeta_2$ outside the support of the eigenvalues. To quantify the distance from the expected location of the spectrum we introduce the quantity 
\begin{equation}\label{deltazeta} \Delta_\zeta := \min\{|\zeta_1|,|\zeta_2|\}-1, \end{equation}
where we use the shorthand $\zeta := (\zeta_1,\zeta_2) \in \C^2 $. We will always work on the sets of the form
\[ \Xi_{\infty}  := \{ \zeta :   |\zeta_1|,|\zeta_2| \in (1,2)\} \quad \text{or} \quad  \Xi_\delta := \{ \zeta : |\zeta_1|,|\zeta_2|\in (1+N^{-\delta},2)  \} \]
for some $\delta > 0$. The constant 2 can be replaced with any finite constant greater than 1.

\begin{theorem} \label{thm:resolventconv}
Let $X$ satisfy Assumption \ref{assum:randmat}. There exist universal constants $\xi>0$ and $\delta>0$ such that
\[   \P\left(\sup_{\zeta \in \Xi_{\delta} }  \big| \tr_N((X-\zeta_1)^{-1}(X^*-\ol\zeta_2)^{-1}) - \langle (\zeta_1 \ol\zeta_2 - S )^{-1} \mathbbm{1} \rangle )  )
 \big|  \leq \frac{1}{N^{\xi}} \right)\geq 1- \frac{C_D}{N^D} \]
for any $D>0$ and  some constant $C_D$.

\end{theorem}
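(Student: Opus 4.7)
The strategy is to linearize the product of resolvents into a single block resolvent and then invoke the matrix Dyson equation.

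\emph{Step 1 (linearization).} For a complex parameter $\lambda$ near zero introduce the $2N\times 2N$ block matrix
\[
L(\lambda, \zeta) \,:=\, \wt X - K(\lambda,\zeta), \qquad \wt X \,:=\, \begin{pmatrix} 0 & X \\ X^* & 0 \end{pmatrix}, \qquad K(\lambda,\zeta) \,:=\, \begin{pmatrix} \lambda I & \zeta_1 I \\ \ol\zeta_2 I & \lambda I\end{pmatrix}.
\]
On $\Xi_\delta$ the smallest singular values of $X-\zeta_i$ are bounded below by $N^{-c}$ with overwhelming probability (an $\eta=0$ bound derivable from the framework of \cite{AEK-circ}), so $L(\lambda,\zeta)$ is invertible for $\lambda$ in a small disk around $0$. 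A Schur complement on the $(1,1)$ block gives
\[
[L(\lambda,\zeta)^{-1}]_{22} \,=\, \lambda\,\sb{(X^*-\ol\zeta_2)(X-\zeta_1) - \lambda^2 I}^{-1},
\]
and differentiating at $\lambda=0$, combined with cyclicity of the trace, yields the key identity
\[
\tr_N \sb{(X-\zeta_1)^{-1}(X^*-\ol\zeta_2)^{-1}} \,=\, \tr_N\, \partial_\lambda [L(\lambda,\zeta)^{-1}]_{22}\Big|_{\lambda=0}.
\]

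\emph{Step 2 (MDE and its solution).} The matrix $\wt X$ is Hermitian with independent entries (up to the conjugate relation $[\wt X]_{i,N+j} = \ol{[\wt X]_{N+j,i}}$), so the correlated Hermitian MDE machinery of \cite{AjankiCorrelated}, adapted to Hermitizations with variance profiles as in \cite{AEK-circ,CHNR}, applies. It produces a deterministic $M(\lambda,\zeta)$ satisfying
\[
-M^{-1} \,=\, K(\lambda,\zeta) + \mathcal{T}[M], \qquad \mathcal{T}[M] \,=\, \E\sb{\wt X M \wt X},
\]
together with a local law $G(\lambda,\zeta) \approx M(\lambda,\zeta)$ in averaged norm with error of order $N^{-\xi'}$ for some $\xi'>0$. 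The operator $\mathcal T$ preserves the block structure and acts on diagonal blocks via $[\mathcal T[M]]_{11} = \diag(\scr S[M_{22}])$ and $[\mathcal T[M]]_{22} = \diag(\scr S^*[M_{11}])$, with $\scr S$ as in \eqref{defS}. Seeking a solution with $[M]_{ab}=\diag(m_{ab})$, the MDE reduces to four coupled vector equations whose unique physical solution at $\lambda=0$ is $m_1 = m_2 = 0$, $m_{12} = -\ol\zeta_2^{-1}\mathbbm 1$, $m_{21} = -\zeta_1^{-1}\mathbbm 1$. Differentiating in $\lambda$ (and using $\partial_\lambda D|_{\lambda=0} = 0$, where $D := m_1 m_2 - m_{12}m_{21}$), the $(1,1)$-block equation becomes the linear system
\[
(\zeta_1\ol\zeta_2 I - S)\, \partial_\lambda m_2\big|_{\lambda=0} \,=\, \mathbbm{1},
\]
uniquely solvable on $\Xi_\delta$ because $|\zeta_1\ol\zeta_2| > 1 = \rho(S)$. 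Averaging over indices gives $\tr_N \partial_\lambda [M]_{22}|_{\lambda=0} = \avg{(\zeta_1\ol\zeta_2 - S)^{-1}\mathbbm 1}$.

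\emph{Step 3 (passing the derivative through the local law).} To convert $G\approx M$ into control of the derivative at $\lambda=0$, I represent
\[
\partial_\lambda [G-M]_{22}\big|_{\lambda=0} \,=\, \frac{1}{2\pi\ii}\oint_{|\lambda|=r}\lambda^{-2}\,[G-M]_{22}(\lambda,\zeta)\, \dd\lambda,
\]
so a uniform-in-$\lambda$ local law on the contour $|\lambda|=r$ suffices. Uniformity in $\zeta\in\Xi_\delta$ is obtained by a grid argument combined with Lipschitz estimates for the resolvent. The main obstacle is quantitative stability of the MDE at the marginal scale $\Delta_\zeta\sim N^{-\delta}$: the stability operator degenerates simultaneously as $|\zeta_1\ol\zeta_2|\to 1$ (the Perron--Frobenius eigenvalue of $S$ collides with the spectral parameter) and as $\lambda\to 0$ (the ansatz becomes rank-deficient on the block diagonals, forcing one to track cancellations beyond leading order). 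Ensuring that the local-law error survives the factor $r^{-1}$ picked up in the Cauchy integral, through a careful adaptation of the fluctuation-averaging estimates of \cite{AjankiCorrelated,AEK-circ,CHNR} to this joint degeneration in $\zeta$ and $\lambda$, is the technical heart of the proof.
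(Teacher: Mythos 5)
Your Step~1 identity is correct and cleverly chosen: the Schur complement gives $[L(\lambda,\zeta)^{-1}]_{22} = \lambda\bigl[(X^*-\ol\zeta_2)(X-\zeta_1)-\lambda^2\bigr]^{-1}$, and differentiating at $\lambda=0$ recovers the product of resolvents exactly. However, the proposal breaks in Step~2: the $2N\times2N$ matrix $L(\lambda,\zeta)=\wt X-K(\lambda,\zeta)$ is \emph{not} Hermitian when $\zeta_1\neq\ol\zeta_2$, because $K$ has $\zeta_1 I$ in the $(1,2)$ block and $\ol\zeta_2 I$ in the $(2,1)$ block, which are not conjugate transposes of each other. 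The MDE existence/uniqueness theory of \cite{Helton01012007} and the local-law machinery of \cite{AjankiCorrelated,AEKN,AEK-circ} that you invoke require a Hermitian random part \emph{and} a deterministic shift of the form (Hermitian matrix) $+\,z$ with $\Im z>0$; a genuinely non-Hermitian deterministic perturbation of $\wt X$ is outside that framework, and the uniqueness, stability, and fluctuation-averaging statements you rely on have no off-the-shelf analogue. This is precisely the obstruction the paper flags (``the spectrum of non-Hermitian matrices \dots\ can be quite unstable'') and resolves by passing to the $4N\times4N$ Hermitization $\bs{H}^\zeta_\alpha$ in \eqref{defH}, at which point the deterministic part $\bs A^\zeta+\bs\alpha$ really is Hermitian. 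To repair your argument you would have to double once more, Hermitizing $L(\lambda,\zeta)$, which lands you essentially on the paper's $4N\times4N$ construction (with the auxiliary parameter in a slightly different position, and a quadratic rather than linear dependence on it in the Schur complement).

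A secondary point: your ansatz $m_1=m_2=0$ for the MDE relies on the self-energy operator $\mathcal T[M]=\E[\wt X M\wt X]$ having no off-diagonal output. The off-diagonal blocks are $\E[XM_{21}X]$ and $\E[X^*M_{12}X^*]$, which vanish only under the additional assumption $\E[x_{ij}^2]=0$. The paper's $\bs{\cal S}$ has been constructed so that this assumption is built in structurally; your reduction to ``four coupled vector equations'' needs to acknowledge it explicitly (or carry the extra terms). This is a smaller issue than the Hermitization gap, but it should be stated.

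Finally, your Step~3 correctly names the real technical difficulty---uniformity at the marginal scale $\Delta_\zeta\sim N^{-\delta}$ and getting the local-law error to beat the $r^{-1}$ factor from the Cauchy integral---but leaves it entirely open. The paper handles this by proving a quantitative $\Delta_\zeta^{-1}$ bound on the inverse stability operator at $(\alpha,z)=(0,0)$ (Lemma \ref{stabatzero}), propagating it to a neighborhood $\Upsilon_\zeta$ via the implicit function theorem (Lemma \ref{lem:nearzero}), deducing a spectral gap for $\bs H^\zeta_\alpha$ (Corollary \ref{lem:gapineig}, Lemma \ref{lemma:noeigs}), and only then applying the local law on a grid; all the powers of $\Delta_\zeta$ are tracked explicitly and the auxiliary parameter $\alpha$ is finally chosen as $N^{-c'}$. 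Without an analogue of this stability chain your Cauchy-integral representation does not close.
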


\begin{proof}[Proof of of Theorem \ref{analyticfunc}]

Corollary \ref{corr:resolventbound} will show that with high probability there is a disk of radius $1+N^{-\delta}$ centered at the origin such that outside the disk the resolvent $(X-\zeta_1)^{-1}$ is bounded. Thus for $N$ sufficiently large, all eigenvalues of $X$ lie inside the
circle $\gamma$ of radius $1+\mu$. We can thus apply Cauchy's theorem, 
\begin{equation}\label{1tr}
 \tr_N[ f(X) g(X^*)]  = \left(\frac{1}{2 \pi \ii} \right)^2 \int_\gamma  \dd \zeta_1 \int_{\ol{\gamma}} \dd \ol{\zeta}_2 \,  f(\zeta_1) g(\ol\zeta_2) \tr_N(X-\zeta_1)^{-1}  (X^*-\ol\zeta_2)^{-1}, 
 \end{equation}
where $\gamma$ is positively oriented and $\ol{\gamma}$ is the same circle oriented negatively. 

Then applying Theorem \ref{thm:resolventconv} yields 
\bes{ \tr_N[ f(X) g(X^*)]  =  \left(\frac{1}{2 \pi \ii} \right)^2  \int_\gamma  \dd \zeta_1 \int_{\ol{\gamma}} \dd \ol{\zeta}_2 \,  f(\zeta_1) g(\ol\zeta_2) \langle (\zeta_1 \ol\zeta_2 - S )^{-1} \mathbbm{1} \rangle    + \epsilon_N, }
where $|\epsilon_N | \lesssim \| f|_\gamma\|_\infty \|g|_\gamma\|_\infty N^{-\xi} $
with probability at least $1 - C_DN^{-D}$ for any $D$ and $N$ sufficiently large.
\end{proof}
We prove Theorem~\ref{thm:resolventconv} at the end of this section, by combining several technical lemmas. Section~\ref{sec:MDE} is devoted to proving these technical lemmas. In Section~\ref{sec:mainresult} we prove Theorem~\ref{thm:heat} and \ref{thm:stationary}. We now outline the proof of Theorem~\ref{thm:resolventconv}, listing the technical lemmas in the reverse order of the actual proof. This better motivates the study of our matrix Dyson equation. However, the actual proofs will avoid any forward referencing to yet unproved lemmas.

Studying polynomials of matrices by instead considering larger block matrices has been successfully implemented to solve a number of problems in random matrix theory; see, for instance, \cite{anderson,haagerup}. Recently this program has been extended to also studying rational functions of random matrices \cite{HMSrealizations}. We will adopt this idea to the present problem.

Furthermore the spectrum of non-Hermitian matrices, in contrast to the Hermitian setting, can be quite unstable and many complex analytic techniques fail. To handle this problem we follow the Hermitization trick of Girko and reduce the problem to studying the spectrum of a family of Hermitian random matrices.

We begin our analysis with a linearization of $(X-\zeta_1)^{-1}(X^*-\ol\zeta_2)^{-1} $, which we then modify in order to compute its limit. Since $(X-\zeta_1)^{-1} (X^*-\ol\zeta_2)^{-1}$ is the (1,1)-entry of the inverse of the block matrix 
\[
 \mtwo{0 & X^*-\ol\zeta_2}{X-\zeta_1 & -I}\,,
\]
this block matrix is a linearization of $(X-\zeta_1)^{-1} (X^*-\ol\zeta_2)^{-1}$. 

In the spirit of \cite{CM} and the Bethe-Salpeter equations, we will express the limit of $\tr_N(X-\zeta_1)^{-1} (X^*-\ol\zeta_2)^{-1}$ in terms of the limit of the resolvent of a larger block matrix whose blocks are linear in $(X-\zeta_i)$ for $i=1,2$. The resolvent of linearized matrix is studied by the matrix Dyson equation. In order to construct this matrix we linearize $\alpha (X-\zeta_1)^{-1} (X^*-\ol\zeta_2)^{-1}$ for some parameter $\alpha$. Let
\begin{equation} \label{defL}
L^{\zeta}_\alpha\,:=\, \mtwo{0 & (X-\zeta_2)^*}{X-\zeta_1 & -\alpha}\,,
\end{equation}
which, by the same logic, is a linearization, as the $(1,1)$-entry of the inverse is $\alpha (X-\zeta_1)^{-1} (X^*-\ol\zeta_2)^{-1}$.

As the matrix $L^{\zeta}_\alpha$ is non-Hermitian it can be sensitive to perturbations. Therefore, we consider its Hermitized version
\begin{equation} \label{defH}
\bs{H}^{\zeta}_\alpha\,:=\, \mtwo{0 & L^{\zeta}_\alpha}{(L^{\zeta}_\alpha)^* & 0}\,,
\end{equation}
as well as the resolvent 
\begin{equation}\label{defG}
\bs{G}^{\zeta}_\alpha(z)\,:=\,(\bs{H}^{\zeta}_\alpha-\1 z \2\bs{1})^{-1}\,,
\end{equation}
with $z= E+ \ii \eta$, where $\eta$ is a non-negative number. Here and in what follows we use boldface to denote $4N \times 4N$ matrices.

For some sufficiently small $\kappa$ to be chosen later, depending only on the model parameters, we consider the indicator function
\[ \Psi_\zeta := \bs{1}\{ \spec |\bs{H}_\alpha^\zeta| \subset [\kappa \Delta_{\zeta}^2 /2, \infty) \} .\] 

We will study the behavior of $\bs{G}^{\zeta}_\alpha(z)$ in the following neighborhood of zero:
\[ \Upsilon_{\zeta} :=  \{ (\alpha,z) :  |\alpha|, |z| <  \kappa \Delta_\zeta^2   \} .\]

The first technical lemma approximates  $\alpha (X-\zeta_1)^{-1} (X^*-\ol\zeta_2)^{-1}$ by the $(3,1)$-entry of $\bs{G}^{\zeta}_\alpha(z)$ when $\Psi_\zeta = 1$.
This lemma will be proven in Section \ref{sec:linear}.

Let $\bs{E}_i \in \C^{4N \times N}$ for $i=1,2,3,4$ be the $4 \times 1$ vector of $N \times N$ matrices with the identity matrix in the $i$-th component. For the following lemma we introduce $x\lesssim y$ to mean there exist a constant $C$, depending only on the model parameters, such that $x \leq C y$.

\begin{lemma} \label{lem:linear}
Let $X$ satisfy Assumption \ref{assum:randmat}, $\zeta \in \Xi_{\infty}$, $|\alpha| >0$. 
Then we have   
\[ \left|  \tr_N\left( \frac{1}{\alpha} \bs{E}_3^*\bs{G}^{\zeta}_\alpha(0)\1\bs{E}_1  - (X-\zeta_1)^{-1}(X^*-\ol\zeta_2)^{-1} \right) \right| \Psi_{\zeta} \\ \lesssim \frac{ \alpha^2}{  \Delta_{\zeta}^{4} }  .\] 
\end{lemma}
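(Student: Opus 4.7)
The approach is to recognize that Lemma~\ref{lem:linear} is in fact an exact identity on the event $\{\Psi_\zeta=1\}$: the difference inside $\tr_N$ vanishes identically, and the stated bound $\alpha^2/\Delta_\zeta^4$ then holds a fortiori. My plan is to read off two block equations from the defining relation $\bs{H}^\zeta_\alpha \bs{G}^\zeta_\alpha(0) = \bs{1}$ and solve them in closed form, using only the block structure of $\bs{H}^\zeta_\alpha$.

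First, the condition $\Psi_\zeta=1$ forces the smallest singular value of $\bs{H}^\zeta_\alpha$ to be at least $\kappa\Delta_\zeta^2/2>0$, so $\bs{G}^\zeta_\alpha(0)=(\bs{H}^\zeta_\alpha)^{-1}$ is well defined and each $N\times N$ block has operator norm at most $2/(\kappa\Delta_\zeta^2)$. In the $4\times 4$ block representation of $\bs{H}^\zeta_\alpha$ implied by \eqref{defL} and \eqref{defH}, the first two block rows are $(0,0,0,X^*-\ol\zeta_2)$ and $(0,0,X-\zeta_1,-\alpha)$. The $(1,1)$ block of the identity $\bs{H}^\zeta_\alpha \bs{G}^\zeta_\alpha(0)=\bs{1}$ reads $(X^*-\ol\zeta_2)[\bs{G}^\zeta_\alpha(0)]_{41}=I$, which (right-invertibility of a square matrix) forces $X^*-\ol\zeta_2$ to be invertible with inverse $[\bs{G}^\zeta_\alpha(0)]_{41}$. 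The $(2,1)$ block then reads $(X-\zeta_1)[\bs{G}^\zeta_\alpha(0)]_{31}=\alpha(X^*-\ol\zeta_2)^{-1}$, and the same right-invertibility argument shows that $X-\zeta_1$ is invertible and yields
\[
\bs{E}_3^*\bs{G}^\zeta_\alpha(0)\bs{E}_1=[\bs{G}^\zeta_\alpha(0)]_{31}=\alpha(X-\zeta_1)^{-1}(X^*-\ol\zeta_2)^{-1}.
\]

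Dividing by $\alpha$ (nonzero by assumption) gives $(X-\zeta_1)^{-1}(X^*-\ol\zeta_2)^{-1}$ exactly, so the difference inside $\tr_N$ is the zero matrix on $\{\Psi_\zeta=1\}$ and the claimed bound holds trivially. The main (and mild) conceptual point is that the individual invertibility of $X-\zeta_1$ and $X^*-\ol\zeta_2$---needed even for the right-hand side of the lemma to be meaningful---is not an independent hypothesis but is extracted from the same block equations as a consequence of the zero blocks in the first two rows of $\bs{H}^\zeta_\alpha$. The somewhat loose $\alpha^2/\Delta_\zeta^4$ bound in the statement is consistent with what a resolvent expansion using $\|\bs{G}^\zeta_0(0)\|\lesssim 1/\Delta_\zeta$ and $\|\bs{H}^\zeta_\alpha-\bs{H}^\zeta_0\|\lesssim|\alpha|$ would produce, presumably anticipating parallel estimates at $z\ne 0$ or under perturbations of the linearization arising later in the matrix Dyson equation analysis.
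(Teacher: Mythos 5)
Your proof is correct, and it takes a genuinely different (and cleaner) route than the paper. The paper works with the Hermitian $2\times 2$ super-block $\bs{G}^\zeta_\alpha(\ii\eta)=\bigl(\bs{H}^\zeta_\alpha - \ii\eta\bigr)^{-1}$ at $\eta>0$, writes $\bs{E}_3^*\bs{G}^\zeta_\alpha(\ii\eta)\bs{E}_1 = E_1^* L^*(\eta^2+LL^*)^{-1}E_1$, expands this with a Schur complement, and only at the end sets $\eta=0$; the resulting difference is then split into two terms, each crudely bounded by $\alpha^2/\Delta_\zeta^4$ using $\|(X-\zeta_i)^{-1}\|\lesssim \Delta_\zeta^{-2}$ on $\{\Psi_\zeta=1\}$. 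Your approach instead reads the $(1,1)$ and $(2,1)$ block equations of $\bs{H}^\zeta_\alpha\bs{G}^\zeta_\alpha(0)=\bs{1}$ directly, obtaining $[\bs{G}^\zeta_\alpha(0)]_{41}=(X^*-\ol\zeta_2)^{-1}$ and then $[\bs{G}^\zeta_\alpha(0)]_{31}=\alpha(X-\zeta_1)^{-1}(X^*-\ol\zeta_2)^{-1}$, with the individual invertibility of $X-\zeta_1$ and $X^*-\ol\zeta_2$ extracted from those same equations via right-invertibility of a square matrix. This buys you the sharper observation that the lemma is an exact identity on $\{\Psi_\zeta=1\}$: the quantity inside $\tr_N$ vanishes, so the stated $\alpha^2/\Delta_\zeta^4$ bound is a (harmless) overestimate. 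In fact the paper's two-term decomposition also sums to zero at $\eta=0$ -- at $\eta=0$ one has $(J-I)J^{-1}=-\alpha^2 Y^{-1}$ and $\alpha^2(\alpha^2+Y)^{-1}J^{-1}=\alpha^2 Y^{-1}$ with $Y=(X-\zeta_1)(X-\zeta_1)^*$, so the two displayed terms cancel exactly -- but the authors bound each term separately rather than noting the cancellation. Your derivation makes transparent what the paper's own preamble asserts (that the $(1,1)$-entry of $L^{-1}$ is exactly $\alpha(X-\zeta_1)^{-1}(X^*-\ol\zeta_2)^{-1}$) and why the lemma holds. Both arguments are valid; yours is more economical and avoids the $\eta>0$ regularization entirely, since invertibility of $\bs{H}^\zeta_\alpha$ (hence of $L^\zeta_\alpha$ and of both resolvents) is already guaranteed on the event $\{\Psi_\zeta=1\}$.
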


To study the resolvent $\bs{G}^{\zeta}_\alpha(z)$ we introduce the matrix Dyson equation (MDE):
\begin{equation} \label{eq:MDE} -\bs{M}^{\zeta}_\alpha(z)^{-1}  = z \bs{1} + \bs{A}^{\zeta} + \bs{\alpha} + \bs{\cal{S}}[\bs{M}^{\zeta}_\alpha(z)] ,
\end{equation}
where the self-energy operator $\bs{\cal{S}} : \C^{4N \times 4N} \to \C^{4N \times 4N} $ is given by
\[
\bs{\cal{S}}[\bs{R}]
\,=\,
\mfour
{ \scr{S}^*[R_{44}] &0 & \scr{S}^*[R_{42}] &  0}
{0 & \scr{S}[R_{33}] &0 & \scr{S}[R_{31}]}
{\scr{S}^*[R_{24}]&0& \scr{S}^*[R_{22} ] &0 }
{0  &\scr{S} [R_{13}]  & 0  &\scr{S}[ R_{11}]}\,,
\]
for any $4 \times 4$ block matrix $\bs{R} \in \C^{4N \times 4N}$ with $R_{ij} \in \C^{N \times N}$ in its $(i,j)$ block,
and
\[ \bs{A}^{\zeta} = \mfour
{0 & 0 & 0 &  \ol{\zeta_2} }
{ 0 & 0 & {\zeta_1} &0}
{0 & \ol{\zeta_1} & 0 & 0}
{ \zeta_2 & 0& 0 & 0},\quad \bs{\alpha} = \alpha( \bs{E}_{24} +\bs{E}_{42}).\]

In the following lemma we use the local law from \cite{AEKN} to show that the resolvent, $\bs{G}_\alpha^\zeta(z)$, is close to the solution of the MDE. We will prove this lemma in Section \ref{sec:locallaw} as well as give a lower bound on the spectrum of $|\bs{H}_{\alpha}^{\zeta}|$, showing $\Psi_\zeta=1$ with high probability.

\begin{lemma}\label{lem:locallaw}
There exist universal constants $\delta>0, p\in \N$ such that 
\[\P\left(  \left|  \tr_N( \bs{E}_3^* \bs{M}^{\zeta}_\alpha(z)  \bs{E}_1 )  -  \tr_N(  \bs{E}_3^* \bs{G}^{\zeta}_\alpha(z)  \bs{E}_1  )  \right|  \le \frac{N^{\epsilon} }{N \Delta_\zeta^p  }   \; : \;  \forall \zeta \in \Xi_\delta  , (\alpha,z) \in \Upsilon_\zeta \right)  \ge 1- \frac{C_{\epsilon,D}}{N^D} \]
for any $\epsilon,D>0$ and some constant $C_{\epsilon,D}>0$.
\end{lemma}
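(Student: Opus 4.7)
The strategy is to apply the averaged local law for Hermitian random matrices with a general variance profile from \cite{AEKN} to the $4N\times 4N$ matrix $\bs{H}^{\zeta}_\alpha$. This matrix is Hermitian with centered entries that are independent up to the Hermitian symmetry, and Assumption~\ref{assum:randmat} supplies the required moment bounds together with the primitivity/flatness of the variance pattern; the self-energy operator of the associated MDE is exactly the $\bs{\cal{S}}$ appearing in \eqref{eq:MDE}. The two quantitative inputs that the local law needs are an $L^\infty$-bound on the MDE solution $\bs{M}^{\zeta}_\alpha(z)$ and a polynomial bound on the norm of the inverse of the stability operator $\bs{\cal{L}}:=\bs{1}-\bs{\cal{C}}_{\bs{M}}\bs{\cal{S}}$, where $\bs{\cal{C}}_{\bs{M}}[\bs{R}]:=\bs{M}\bs{R}\bs{M}$. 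Supplying these two quantitative inputs is the content of the proof.

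First, I would analyze the MDE at the base point $\alpha=0$, $z=0$. There $\bs{H}^{\zeta}_0$ is a block anti-diagonal Hermitization associated to the pair $(X-\zeta_1,X-\zeta_2)$, so the MDE decouples into two copies of the MDE governing the Hermitization of $X-\zeta_i$. For $\zeta\in\Xi_\infty$ these have been thoroughly analyzed in \cite{AEK-circ,CHNR}: they admit unique off-diagonal solutions with $\|\bs{M}^{\zeta}_0(0)\|\lesssim \Delta_\zeta^{-1}$, and the self-consistent density of states of $|\bs{H}^{\zeta}_0|$ has a spectral gap at the origin of order $\Delta_\zeta^2$. A quantitative implicit function argument then extends the solution uniquely to all $(\alpha,z)\in\Upsilon_\zeta$ with the same norm bound, provided $\kappa$ is taken sufficiently small in terms of the model parameters.

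The main technical step is to bound $\|\bs{\cal{L}}^{-1}\|$ by a polynomial $\Delta_\zeta^{-q}$. Since $\bs{\cal{S}}$ maps onto block-diagonal matrices and acts only through $S$ and $S^*$, the relevant action of $\bs{\cal{C}}_{\bs{M}}\bs{\cal{S}}$ reduces to a finite linear problem built from $S$, $S^*$, $\zeta_1,\zeta_2$ and the diagonal blocks of $\bs{M}$. At the base point, the Perron--Frobenius structure provided by Assumption~\ref{assum:randmat}(1) (cf.~Remark~\ref{Seigenvector}) together with the normalization $\rho(S)=1$ produces a spectral gap at $1$ of order $\Delta_\zeta^2$. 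The hardest part of the argument is to show that this gap persists throughout $\Upsilon_\zeta$: by further shrinking $\kappa$ and using a Neumann series perturbation around the base point, together with the continuity of $\bs{M}$ in $(\alpha,z)$, one obtains $\|\bs{\cal{L}}^{-1}\|\lesssim \Delta_\zeta^{-q}$ uniformly on the full parameter set.

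With both ingredients in hand, the averaged local law of \cite{AEKN} applied to the deterministic observable $\bs{B}:=\bs{E}_1\bs{E}_3^*$ (which has unit operator norm) yields the pointwise estimate
\[
\big|\tr_N \bs{E}_3^*(\bs{G}^{\zeta}_\alpha(z)-\bs{M}^{\zeta}_\alpha(z))\bs{E}_1\big|\le \frac{N^\epsilon}{N\Delta_\zeta^p}
\]
with probability at least $1-C_{\epsilon,D}N^{-D}$ at every fixed $(\zeta,\alpha,z)$. To convert this pointwise bound into the uniform statement claimed in the lemma, I would take an $N^{-C}$-net in $\Xi_\delta$ and in $\Upsilon_\zeta$, apply the union bound, and absorb the polynomial-in-$N$ Lipschitz losses of both sides into the $N^\epsilon$ factor. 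The Lipschitz continuity in $(\zeta,\alpha,z)$ is controlled using the complementary lower bound on $\spec|\bs{H}^{\zeta}_\alpha|$, proved in the same section, which keeps $\bs{G}^{\zeta}_\alpha(z)$ uniformly bounded on $\Upsilon_\zeta$.
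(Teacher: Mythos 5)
Your proposal follows essentially the same route as the paper: feed the MDE stability bound and the $L^\infty$ bound on $\bs{M}^{\zeta}_\alpha(z)$ (which the paper establishes beforehand in Lemmas~\ref{stabatzero}--\ref{lem:nearzero}) into the averaged local law of \cite{AEKN}, and then upgrade the pointwise estimate to a uniform one by a polynomially fine grid together with Lipschitz control of both $\bs{G}$ and $\bs{M}$. The only cosmetic difference is that the paper verifies the stability and boundedness inputs by explicitly inverting $\bs{A}^\zeta$ and diagonalizing the saturated self-energy operator $\bs{\cal{F}}$ at the base point, rather than appealing to a decoupling into two Hermitizations and citing \cite{AEK-circ,CHNR}, but both lead to the same estimates and the proof of the lemma itself is identical.
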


In Section \ref{sec:solat0} we solve the MDE at $\alpha= z= 0$ and then use this solution to show the equation is stable at this point. This stability is then used to show the solution is analytic in $\alpha$ and that the self-consistent density of states (defined below in \eqref{eq: scdos}) associated to the MDE is bounded away from zero. In Section \ref{sec:solnear0} we prove the following lemma and give properties of the solution to the MDE in a neighborhood of zero.
\begin{lemma} \label{lem:alphaexp}
Let $\zeta \in \Xi_{\infty}$ and $(\alpha,0) \in \Upsilon_\zeta$, then
\[ \Big|\tr_N\Big( \frac{1}{\alpha}   \bs{E}_3^* \bs{M}_{\alpha}^\zeta(0)   \bs{E}_1 - \d_{\alpha}|_{\alpha=0} \bs{E}_3^*  \bs{M}_{\alpha}^{\zeta}(0)  \bs{E}_1  \Big) \Big| \lesssim\frac{ |\alpha|}{\Delta_\zeta^2 }  .\]
\end{lemma}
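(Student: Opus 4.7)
The plan is to treat $f(\alpha) := \tr_N(\bs{E}_3^* \bs{M}_\alpha^\zeta(0) \bs{E}_1)$ as a holomorphic function of $\alpha$, to show that $f(0) = 0$ and $|f(\alpha)| \lesssim |\alpha|$ on a disk of radius comparable to $\Delta_\zeta^2$, and then to extract the stated estimate from a Cauchy bound on the Taylor coefficients of $g(\alpha) := f(\alpha)/\alpha$. The vanishing $f(0) = 0$ follows from the block structure of the MDE \eqref{eq:MDE} at $(\alpha, z) = (0, 0)$, where it reduces to $-\bs{M}^{-1} = \bs{A}^\zeta + \bs{\cal{S}}[\bs{M}]$. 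The matrix $\bs{A}^\zeta$ is supported on the blocks $(1,4),(2,3),(3,2),(4,1)$, and inspection of the definition of $\bs{\cal{S}}$ shows that for any $\bs{M}$ supported on those four blocks together with the four diagonal blocks, $\bs{\cal{S}}[\bs{M}]$ is block diagonal. Hence the subspace $\mathcal{P}$ of $4\times 4$ block matrices whose only nonzero blocks sit at the four diagonal positions and at $(1,4),(2,3),(3,2),(4,1)$ is preserved by the right-hand side $\bs{A}^\zeta + \bs{\cal{S}}[\genarg]$. It is also preserved by inversion, since after a suitable reordering of the four block-rows and block-columns any element of $\mathcal{P}$ becomes block diagonal with two independent $2\times 2$ blocks. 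Consequently the MDE at $(\alpha, z) = (0, 0)$ admits a solution in $\mathcal{P}$, and by uniqueness this must be $\bs{M}_0^\zeta(0)$; in particular its $(3,1)$-block vanishes, giving $f(0) = 0$.

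Stability of the MDE at $(\alpha, z) = (0, 0)$, established in Section~\ref{sec:solnear0}, yields by the implicit function theorem that $\alpha \mapsto \bs{M}_\alpha^\zeta(0)$ is analytic on the disk $\{|\alpha| < \kappa\Delta_\zeta^2\}$ with $\|\bs{M}_\alpha^\zeta(0) - \bs{M}_0^\zeta(0)\| \lesssim |\alpha|$ there. Combined with the vanishing of the $(3,1)$-block at $\alpha = 0$ this gives $|f(\alpha)| \lesssim |\alpha|$ uniformly on the disk, so $g(\alpha) = f(\alpha)/\alpha$ is analytic with $g(0) = \d_\alpha|_{\alpha=0}\tr_N(\bs{E}_3^* \bs{M}_\alpha^\zeta(0)\bs{E}_1)$ and $|g| \lesssim 1$ on the disk. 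A Cauchy estimate at radius $R = \kappa\Delta_\zeta^2/2$ then gives $|g(\alpha) - g(0)| \lesssim |\alpha|/R \lesssim |\alpha|/\Delta_\zeta^2$, which is exactly the claim.

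The hard ingredient, taken as input from Section~\ref{sec:solnear0}, is the stability of the MDE on the full disk of radius $\kappa\Delta_\zeta^2$ uniformly in $\zeta \in \Xi_\infty$; once this is in place, the rest of the proof is an elementary complex-analytic estimate on a function that vanishes at the origin.
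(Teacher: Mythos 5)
Your argument follows the same route as the paper: analyticity of $\alpha \mapsto \bs{M}^\zeta_\alpha(0)$ near zero from Lemma~\ref{lem:nearzero}, vanishing of the $(3,1)$-block at $\alpha=0$, and a Cauchy/Taylor estimate on the disk $\{|\alpha|<\kappa\Delta_\zeta^2\}$. The paper writes this as a second-order Taylor expansion $\bs{M}^\zeta_\alpha(0)=\bs{M}^\zeta_0(0)+\alpha\,\d_\alpha|_{\alpha=0}\bs{M}^\zeta_\alpha(0)+O(\alpha^2\Delta_\zeta^{-2})$ and quotes $(\bs{M}^\zeta_0(0))_{31}=0$; your $g(\alpha)=f(\alpha)/\alpha$ device is the same estimate in slightly different packaging.

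Two points worth flagging. First, your invariant-subspace argument for $f(0)=0$ is correct but unnecessarily indirect: the paper already has the explicit solution $\bs{M}^\zeta_0(0)=-(\bs{A}^\zeta)^{-1}$ from Section~\ref{sec:solat0}, and since $\bs{A}^\zeta$ is supported on the anti-diagonal blocks $(1,4),(2,3),(3,2),(4,1)$, so is its inverse, giving $(\bs{M}^\zeta_0(0))_{31}=0$ immediately. Second, you assert $|f(\alpha)|\lesssim|\alpha|$ uniformly on the disk, but what \eqref{eq:contofM} actually gives is $\|\bs{M}^\zeta_\alpha(0)-\bs{M}^\zeta_0(0)\|\lesssim|\alpha|\Delta_\zeta^{-1}$, hence only $|g|\lesssim\Delta_\zeta^{-1}$; carried through the Cauchy estimate this yields $|g(\alpha)-g(0)|\lesssim|\alpha|\Delta_\zeta^{-3}$, one power of $\Delta_\zeta$ worse than stated. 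This discrepancy is shared by the paper's own one-line proof (the $O(\alpha^2\Delta_\zeta^{-2})$ remainder there is not obviously better than $O(\alpha^2\Delta_\zeta^{-3})$ given only \eqref{eq:contofM} and the stability bound), and it is harmless downstream since $\alpha=N^{-c'}$ is chosen small and only polynomial control in $\Delta_\zeta$ is needed, but you should not claim the $\Delta_\zeta$-free bound on $|f|$ without justification.
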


With this regularity we expand the matrix Dyson equation in a series in $\alpha$ and express the order $\alpha$ terms through the order $1$ terms, leading to an equation for $\d_{\alpha}|_{\alpha=0} \tr_N \bs{E}_{3}^* \bs{M}_{\alpha}^{\zeta}(0)  \bs{E}_1 $  which we explicitly solve in Section \ref{sec:compderiv}, yielding the following formula: 

 \begin{lemma} \label{lem:derivalpha}
Let $\zeta \in \Xi_{\infty}$, then
\[ \d_{\alpha}|_{\alpha=0} \tr_N \bs{E}_{3}^* \bs{M}_{\alpha}^{\zeta}(0) \bs{E}_{1}=  \langle (\zeta_1 \ol\zeta_2 - S )^{-1} \mathbbm{1} \rangle.\]

\end{lemma}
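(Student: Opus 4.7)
The plan is to differentiate the matrix Dyson equation \eqref{eq:MDE} in $\alpha$ at the point $\alpha=z=0$ to obtain a linear equation for $\bs{N}:=\d_\alpha\bs{M}^\zeta_\alpha(0)|_{\alpha=0}$, and then solve this equation explicitly for the $(3,1)$-block. Using the identity $\d_\alpha(\bs{M}^{-1})=-\bs{M}^{-1}(\d_\alpha\bs{M})\bs{M}^{-1}$, differentiation of $-\bs{M}^{-1}=z\bs{1}+\bs{A}^\zeta+\bs{\alpha}+\bs{\cal{S}}[\bs{M}]$ yields
\[
\bs{N}\,-\,\bs{M}_0\,\bs{\cal{S}}[\bs{N}]\,\bs{M}_0\;=\;\bs{M}_0(\bs{E}_{24}+\bs{E}_{42})\bs{M}_0,
\]
where $\bs{M}_0:=\bs{M}^\zeta_0(0)$ is the solution to the MDE at $\alpha=z=0$ constructed in Section~\ref{sec:solat0}.

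At $\alpha=0$ the MDE decouples into two independent $2\times 2$ sub-block systems: one coupling block-indices $\{1,4\}$ via $\zeta_2,\ol\zeta_2$, the other coupling $\{2,3\}$ via $\zeta_1,\ol\zeta_1$; each is the standard Hermitized MDE for $X-\zeta_i$ at spectral parameter $0$. Since $|\zeta_i|>1$ lies outside the support of the limiting singular value distribution, the stable solution $\bs{M}_0$ has nonzero blocks only at the positions $\{(i,i):1\le i\le 4\}\cup\{(1,4),(4,1),(2,3),(3,2)\}$, and each such block is a diagonal $N\times N$ matrix. The block-inverse formula for the decoupled $\{2,3\}$- and $\{1,4\}$-subsystems expresses the off-diagonal blocks explicitly as $\bs{E}_3^*\bs{M}_0\bs{E}_2=-\ol\zeta_1 M_2(\scr{S}^*[M_2])^{-1}$ and $\bs{E}_4^*\bs{M}_0\bs{E}_1=-\zeta_2 M_1(\scr{S}[M_1])^{-1}$, together with their Hermitian conjugates, where $M_i:=\bs{E}_i^*\bs{M}_0\bs{E}_i$.

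A direct inspection of the block action of $\bs{\cal{S}}$ shows that the support pattern $\mathcal{F}:=\{(1,2),(1,3),(2,1),(2,4),(3,1),(3,4),(4,2),(4,3)\}$ is invariant under the linear operator $\bs{X}\mapsto\bs{X}-\bs{M}_0\bs{\cal{S}}[\bs{X}]\bs{M}_0$, and the right-hand side above is supported on $\mathcal{F}$; moreover the resulting eight-block system decouples further into four closed two-block subsystems, of which only the one involving $\bs{N}_{31}:=\bs{E}_3^*\bs{N}\bs{E}_1$ and $\bs{N}_{24}:=\bs{E}_2^*\bs{N}\bs{E}_4$ is needed. Parametrizing these two diagonal blocks by their diagonals in $\C^N$, on which $\scr{S}$ and $\scr{S}^*$ act as $S$ and $S^*$, reduces the closed subsystem to a $2\times 2$ linear system for two vectors $n_{31},n_{24}\in\C^N$ whose coefficients involve only $S$, $S^*$ and the vectors $m_i$, $Sm_1$, $S^*m_2$, $Sm_3$, $S^*m_4$.

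Using the scalar identities satisfied by the $m_i$'s (coming from the Hermitized MDEs at $\alpha=0$, e.g.\ $-m_2^{-1}=Sm_3-|\zeta_1|^2/(S^*m_2)$ and analogues) and eliminating $n_{24}$ from the two-block system, one reduces the equations to the single scalar relation $(\zeta_1\ol\zeta_2\,I-S)\,p=\mathbbm{1}$ on $\C^N$, whose normalized average $\langle p\rangle$ equals $\tr_N(\bs{E}_3^*\bs{N}\bs{E}_1)$, yielding the claim. The main obstacle is this final algebraic simplification: one has to verify that the phases $\ol\zeta_1,\zeta_2$ coming from $\bs{E}_3^*\bs{M}_0\bs{E}_2,\bs{E}_4^*\bs{M}_0\bs{E}_1$ and their conjugates from $\bs{E}_2^*\bs{M}_0\bs{E}_3,\bs{E}_1^*\bs{M}_0\bs{E}_4$ recombine into the single complex factor $\zeta_1\ol\zeta_2$, and that the modulus-dependent factors $|\zeta_i|^2$ cancel against the scalar Hermitized MDE identities so that only a bare $S$ remains inside $(\zeta_1\ol\zeta_2-S)^{-1}$. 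All other steps---the differentiation, the verification of the invariant block pattern, and the reduction to diagonals---are essentially structural.
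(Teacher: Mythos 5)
Your overall strategy---differentiate the MDE at $\alpha=0$, use the stability operator, and close a block-wise linear system for the $(3,1)$-block---is the same as the paper's, but there is a concrete structural error that invalidates your derivation. The paper defines the base point explicitly (Section~\ref{sec:solat0}): $\bs{M}^{\zeta}_0(0) = -(\bs{A}^{\zeta})^{-1}$, which is supported \emph{only} on the off-diagonal blocks $(1,4)$, $(4,1)$, $(2,3)$, $(3,2)$ and has \emph{zero} diagonal blocks. (This is forced by the chiral symmetry of the Hermitization at $z=0$, and it is consistent with the MDE because with this support pattern $\bs{\cal{S}}[\bs{M}^\zeta_0(0)]=0$.) Your assertion that $\bs{M}_0$ has nonzero diagonal blocks $M_i$ is therefore false here, and the formulas you build on it---e.g.\ $\bs{E}_3^*\bs{M}_0\bs{E}_2=-\ol\zeta_1 M_2(\scr{S}^*[M_2])^{-1}$ and the scalar identity $-m_2^{-1}=Sm_3-|\zeta_1|^2/(S^*m_2)$---are meaningless, since they require inverting quantities that vanish identically.

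Once the correct $\bs{M}_0$ is used, the alleged coupling between $\bs{N}_{31}$ and $\bs{N}_{24}$ disappears and the two-vector elimination you describe is unnecessary: the right-hand side $\bs{M}_0(\bs{E}_{24}+\bs{E}_{42})\bs{M}_0$ reduces to $\tfrac{1}{\zeta_1\ol\zeta_2}\bs{E}_{31}+\tfrac{1}{\ol\zeta_1\zeta_2}\bs{E}_{13}$, and $\bs{\cal{C}}_{\bs{M}_0}\bs{\cal{S}}$ maps the $(3,1)$-block to itself, $N_{31}\mapsto \tfrac{1}{\zeta_1\ol\zeta_2}\scr{S}[N_{31}]$, so the $(3,1)$-equation is already closed by itself. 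Solving it gives immediately $N_{31}=(\zeta_1\ol\zeta_2-\scr{S})^{-1}[I]=\diag\bigl((\zeta_1\ol\zeta_2-S)^{-1}\mathbbm{1}\bigr)$, which is the paper's short calculation. So the main obstacle you flagged (the cancellation of the $|\zeta_i|^2$ factors and the recombination of phases into $\zeta_1\ol\zeta_2$) is not a delicate algebraic coincidence---it never arises because the terms you were trying to cancel are zero from the start.
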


We conclude this section by putting together Lemmas \ref{lem:linear} through \ref{lem:derivalpha}  to prove Theorem~\ref{thm:resolventconv}. 

\begin{proof}[Proof of Theorem \ref{thm:resolventconv}]

Using the triangle inequality  
\begin{align*}
  &| \tr_N(X-\zeta_1)^{-1}(X^*-\ol\zeta_2)^{-1} - \langle (\zeta_1 \ol\zeta_2 - S )^{-1}\mathbbm{1} \rangle  | \\
 \leq& | \tr_N(X-\zeta_1)^{-1}(X^*-\ol\zeta_2)^{-1}  -  \alpha^{-1} \tr_N \bs{E}_3^*\bs{G}^{\zeta}_\alpha(0)\1\bs{E}_1  | \\
 &+| \alpha^{-1}\tr_N \bs{E}_3^*\bs{G}^{\zeta}_\alpha(0)\1\bs{E}_1 
 -  \alpha^{-1} \tr_N \bs{E}_3^* \bs{M}_{\alpha}^\zeta(0)   \bs{E}_1    | \\ 
 &+ | \alpha^{-1} \tr_N  \bs{E}_3^* \bs{M}_{\alpha}^\zeta(0)   \bs{E}_1  -\langle (\zeta_1 \ol\zeta_2 - S )^{-1}\mathbbm{1}\rangle  |  .
\end{align*}
By Lemma \ref{lem:linear}, for all $\zeta \in \Xi_{\delta}$
\[ \Big| \tr_N(X-\zeta_1)^{-1}(X^*-\ol\zeta_2)^{-1}  -  \alpha^{-1} \tr_N \bs{E}_3^*\bs{G}^{\zeta}_\alpha(0)\1\bs{E}_1  \Big| \Psi_{\zeta} \lesssim  \frac{|\alpha|^2}{\Delta_{\zeta}^{4}}. \]
In Lemma \ref{lemma:noeigs} we will show
\[
  \P\big(\Psi_{\zeta}=1 \;  : \; \forall \zeta \in \Xi_\delta  \big)\ge 1- \frac{C_D}{N^D}
\]
for any $D$ with some constant $C_D$. 
By Lemma \ref{lem:locallaw}, for every $\zeta \in \Xi_\delta$ we have   
\[| \alpha^{-1}\tr_N \bs{E}_3^*\bs{G}^{\zeta}_\alpha(0)\1\bs{E}_1 -  \alpha^{-1} \tr_N \bs{E}_3^* \bs{M}_{\alpha}^\zeta(0)   \bs{E}_1    |  \lesssim  |\alpha|^{-1} N^{-1+\epsilon} \Delta_\zeta^{-p  } \] 
with probability $1- O(N^{-D})$ for any $D,\epsilon>0$.

By Lemmas \ref{lem:alphaexp} and \ref{lem:derivalpha},   
\[| \alpha^{-1} \tr_N  \bs{E}_3^* \bs{M}_{\alpha}^\zeta(0)   \bs{E}_1  -\langle (\zeta_1 \ol\zeta_2 - S )^{-1}\mathbbm{1} \rangle  |  \lesssim |\alpha|/\Delta_\zeta^2 \] 

Choosing $\alpha= N^{-c'}$ with a sufficiently small positive $c'>0$ completes the proof.

\end{proof}

\section{Matrix Dyson Equation}
\label{sec:MDE}

Recall the matrix Dyson equation \eqref{eq:MDE}:
 \[  -\bs{M}^{\zeta}_\alpha(z)^{-1}  = z \bs{1} + \bs{A}^{\zeta} + \bs{\alpha} + \bs{\cal{S}}[\bs{M}^{\zeta}_\alpha(z)] . \]

For $z \in \C^+$, $\zeta \in \C^2$ and $\alpha \in \R$, let $\bs{M}^{\zeta}_{\alpha}(z)$ be the solution to the equation \eqref{eq:MDE} with positive definite imaginary part, $\Im(\bs{M}^{\zeta}_{\alpha}(z)) =  \frac{1}{2\ii}(\bs{M}^{\zeta}_{\alpha}(z)-\bs{M}^{\zeta}_{\alpha}(z)^*) > 0$. In \cite{Helton01012007} it is shown that when $\Im(z \bs{1} + \bs{A}^{\zeta} + \bs{\alpha}) > 0 $ there is a unique such solution.

To the solution of the MDE we associate an $N$-vector of positive semidefinite matrix-valued measures, $v_j^{z,\alpha} \in \C^{4 \times 4}$, on the real line by
\[ m^{\zeta,\alpha}_{j}(z) = \int_{\R} \frac{ v_j^{\zeta,\alpha}(d \lambda) }{ \lambda - z }, \quad j=1,\ldots, N ,  \] 
where $m^{\zeta,\alpha}_{j}$ is the $4 \times 4$ matrix whose $(k,l)$-entry is the $(j,j)$-entry of the $(k,l)$-block of $\bs{M}_{\alpha}^{\zeta}(z)$.  
In \cite{AEKN}, Proposition 3.2, the authors show the existence of these measures and that they are compactly supported. 

We define the {\it self-consistent density of states} to be the scalar-valued measure
\begin{equation} \label{eq: scdos}
\rho^{\zeta,\alpha}(d \tau) := \frac{1}{4 N} \sum_{j=1}^N \tr v_j^{\zeta,\alpha}(d \tau) .
\end{equation}

The distance between two solutions to the MDE with nearby parameters can be bounded by showing that the MDE is stable. This stability is quantified by the norm of the inverse of the {\it stability operator }
\begin{equation} \label{eq:stab}
 \bs{\cal{L}}_\alpha  :=  \bs{1} - \bs{M}_{\alpha}^{\zeta}(z)\bs{\cal{S}}[\cdot]\bs{M}_{\alpha}^{\zeta}(z).
 \end{equation} 
 In \cite{AEKN} this operator was used to prove the local law and control the smoothness of the solution to the MDE in its parameters. Note that $\bs{\cal{L}}_\alpha$ depends on $\zeta$ and $z$ but we omit this dependence from the notation.

\subsection{Exact solution at 0}
\label{sec:solat0}

At $\alpha = 0, z = 0, |\zeta_1|,|\zeta_2|> 1$ we define \[ \bs{ M}_{0}^{\zeta}(0) := - (\bs{A}^{\zeta})^{-1} ,\] 
which solves \eqref{eq:MDE}. In this section we bound the stability operator associated to this solution. We will use this stability and the implicit function theorem in the next section (Lemma \ref{lem:nearzero}) to control the solution to the MDE in a neighborhood of zero. This shows that $\bs{ M}_{0}^{\zeta}(0)$ in fact agrees with the extension to the real line of the unique solution with positive imaginary part to \eqref{eq:MDE}. Alternatively, the analysis given in \cite{AEK-circ,CHNR} could also be used to show this is the correct solution in the limit as $\Im(z) \to 0$. 

We now introduce some notation and conventions. Products and fractions of vectors are taken componentwise, e.g. $(vw)_i = v_i w_i$. Given an $N$-dimensional vector $v$, we define $\diag(v)$ to be  the $N\times N$ matrix with $v$ placed along the diagonal.  Given a $4N \times 4N$ matrix $\bs{L}$, we define the linear operator acting on $4N \times 4N$ matrices $\bs{\cal{C}}_{\bs{L}}[\bs{R}] := \bs{L}\bs{R}\bs{L} $. Given an $N\times N$ matrix, $T$, and $1 \leq k,l \leq 4$, we define $\bs{E}_{kl}[T]$ to be the $4N \times 4N$ block matrix with $T$ in its $(k,l)$-block and zeros elsewhere. We use the shorthand $\bs{E}_{kl}=\bs{E}_{kl}[I]$. The norm, $\|\cdot \|$, when applied to matrices will denote the usual operator norm induced by the Euclidean metric and when applied to operators acting on matrices will denote the induced operator norm. The norm $\|\cdot \|_{sp}$ on operators acting on matrices will denote the norm induced by the Hilbert-Schmidt norm on matrices. 

\begin{lemma} \label{stabatzero}
Let $\zeta \in \Xi_{\infty}$, then the inverse of the stability operator, $\bs{\cal{ L}}_0=( \bs{1} -\bs{\cal{C}}_{\bs{ M}^{\zeta}_0(0)}\bs{\cal{S}})$, at $z=0$ satisfies the bound

\[\| \bs{\cal{ L}}_0^{-1}\| 
 \lesssim \Delta_\zeta^{-1}. \]

\end{lemma}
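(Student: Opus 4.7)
The plan is to exploit the very sparse block structure of $\bs{M}_0^\zeta(0)=-(\bs{A}^\zeta)^{-1}$ at the base point $\alpha=z=0$: it forces $\bs{\cal{L}}_0$ to decompose into eight independent scalar-type operators of the form $I-c\, S^\sharp$ acting on diagonals of $N\times N$ matrices, each of which can be inverted by a Neumann series controlled using Perron--Frobenius.

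First I would compute $\bs{M}_0^\zeta(0)=-(\bs{A}^\zeta)^{-1}$ explicitly. Because $\bs{A}^\zeta$ only has scalar-times-identity entries in the four blocks $(1,4),(2,3),(3,2),(4,1)$, direct inversion yields
\[
\bs{M}_0^\zeta(0)\,=\,-\frac{1}{\zeta_2}\bs{E}_{14}-\frac{1}{\ol{\zeta_1}}\bs{E}_{23}-\frac{1}{\zeta_1}\bs{E}_{32}-\frac{1}{\ol{\zeta_2}}\bs{E}_{41}.
\]
These four positions are disjoint from the eight block positions where $\bs{\cal{S}}[\bs{R}]$ can be nonzero, so $\bs{\cal{S}}[\bs{M}_0^\zeta(0)]=0$, confirming the MDE. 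Let $V$ be the subspace of $4N\times 4N$ matrices supported in the eight blocks $(i,j)\in\{(1,1),(2,2),(3,3),(4,4),(1,3),(3,1),(2,4),(4,2)\}$ whose entries in these blocks are diagonal $N\times N$ matrices. The image of $\bs{\cal{S}}$ always lies in $V$, and $\bs{\cal{S}}[\bs{R}]$ depends only on $P_V\bs{R}$. Conjugation by $\bs{M}_0^\zeta(0)$ permutes the block indices by the involution $\sigma=(1\,4)(2\,3)$, sending block $(k,l)$ to $(\sigma(k),\sigma(l))$ with a scalar factor determined by the entries of $\bs{M}_0^\zeta(0)$, and hence preserves $V$. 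Tracking the combinatorics, the composition $\bs{\cal{C}}_{\bs{M}_0^\zeta(0)}\bs{\cal{S}}$ decouples $V$ into eight independent one-mode actions: the diagonal modes evolve as $r_{ii}\mapsto c_i\, S^{\sharp_i} r_{ii}$ with $|c_i|^{-1}\in\{|\zeta_1|^2,|\zeta_2|^2\}$ and $S^{\sharp_i}\in\{S,S^*\}$, and the off-diagonal modes evolve with $|c|^{-1}=|\zeta_1||\zeta_2|$. In every mode $|c|^{-1}\ge 1+\Delta_\zeta$, so $1-|c|\gtrsim\Delta_\zeta$.

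To invert each mode I would use $(I-cS^\sharp)^{-1}=\sum_{k\ge 0}c^k(S^\sharp)^k$ and measure it in the $\ell^\infty\to\ell^\infty$ norm, which coincides with the operator norm of $\diag(v)$. The crucial ingredient is the uniform-in-$k$ bound $\|S^k\|_{\ell^\infty\to\ell^\infty}=\|S^k\mathbbm{1}\|_\infty\lesssim 1$, which follows from two-sided control of the Perron eigenvector: uniform primitivity together with $s_{ij}\lesssim 1/N$ provides constants $0<\kappa\le C_0$, depending only on the model parameters, with $\kappa\le(v_r)_i\le C_0$. Then $\mathbbm{1}\le\kappa^{-1}v_r$ entrywise, so $S^k\mathbbm{1}\le\kappa^{-1}S^kv_r=\kappa^{-1}v_r$, giving $\|S^k\mathbbm{1}\|_\infty\lesssim 1$; the argument for $(S^*)^k$ is identical using $v_l$. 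Summing the geometric series mode by mode yields $\|(I-cS^\sharp)^{-1}\|_{op}\lesssim(1-|c|)^{-1}\lesssim\Delta_\zeta^{-1}$.

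To assemble this into $\|\bs{\cal{L}}_0^{-1}\|$, I would split an arbitrary $\bs{R}=\bs{R}_V+\bs{R}_W$ with $\bs{R}_W$ in a complement of $V$; since $\bs{\cal{C}}_{\bs{M}_0^\zeta(0)}\bs{\cal{S}}[\bs{R}]=\bs{\cal{C}}_{\bs{M}_0^\zeta(0)}\bs{\cal{S}}[\bs{R}_V]$, the equation $\bs{\cal{L}}_0\bs{R}=\bs{Q}$ decouples into $(\bs{\cal{L}}_0|_V)\bs{R}_V=P_V\bs{Q}$ and $\bs{R}_W=\bs{Q}-P_V\bs{Q}$, and $P_V$ has operator norm bounded by eight (sum of eight single-block-plus-diagonal extractions, each of norm $\le 1$). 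The main obstacle is the uniform-in-$k$ bound on $\|S^k\|_{\ell^\infty\to\ell^\infty}$, which rests on the two-sided control of the Perron eigenvectors provided by uniform primitivity; a secondary bookkeeping challenge is the finite but careful verification that the eight modes genuinely decouple (rather than forming a block-triangular system) under $\bs{\cal{C}}_{\bs{M}_0^\zeta(0)}\bs{\cal{S}}$, which requires tracking the interaction of $\sigma$ with the scalar prefactors and with the choice of $S$ versus $S^*$ in $\scr{S}$.
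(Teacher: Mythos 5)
Your argument is correct and takes a genuinely different route from the paper. Both proofs begin with the same explicit computation of $\bs{M}_0^\zeta(0)=-(\bs{A}^\zeta)^{-1}$ and the observation that $\bs{\cal{C}}_{\bs{M}_0^\zeta(0)}\bs{\cal{S}}$ acts only on the eight block-diagonal positions, but from there they diverge. The paper conjugates by a positive transformation $\bs{\cal{C}}_{\sqrt{\bs{V}}}$ (built from $|\zeta_i|$ and $\sqrt{v_l/v_r}$) and a unitary $\bs{\cal{C}}_{\bs{U}}$ to write $\bs{1}-\bs{\cal{C}}_{\bs{M}_0^\zeta(0)}\bs{\cal{S}}=\bs{\cal{C}}_{\sqrt{\bs{V}}}(\bs{1}-\bs{\cal{C}}_{\bs{U}}\bs{\cal{F}})\bs{\cal{C}}_{\sqrt{\bs{V}}}^{-1}$ with $\bs{\cal{F}}$ self-adjoint; it then diagonalizes $\bs{\cal{F}}$ exactly (top eigenvalues $\pm|\zeta_i\zeta_j|^{-1}$ with eigenmatrices built from $\diag\sqrt{v_lv_r}$), bounds $\|(\bs{1}-\bs{\cal{C}}_{\bs{U}}\bs{\cal{F}})^{-1}\|_{sp}\le(1-\|\bs{\cal{F}}\|_{sp})^{-1}$ using unitarity of $\bs{\cal{C}}_{\bs{U}}$, and finally transfers from Hilbert--Schmidt to operator norm via inequality (4.55) of \cite{AjankiCorrelated}. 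You instead exploit the mode decoupling to reduce to eight independent scalar resolvents $(I-cS^\sharp)^{-1}$ and invert each by a Neumann series measured directly in the $\ell^\infty\to\ell^\infty$ (equivalently $\diag$-operator) norm, with the uniform-in-$k$ bound $\|S^k\|_{\ell^\infty\to\ell^\infty}\lesssim 1$ coming from the two-sided Perron eigenvector control. Your route is more elementary and self-contained: it avoids the symmetrization step and the spectral-to-operator norm comparison. On the other hand, the paper's $\bs{\cal{F}}$-based mechanism is the one that scales to the general $(\alpha,z)\neq(0,0)$ setting where $\bs{M}$ is no longer a pure off-diagonal matrix and the mode decoupling no longer trivializes, which is why it is used as a template throughout the MDE literature. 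Both proofs ultimately draw on the same structural ingredient (uniform primitivity of $S$ forcing $v_r\sim v_l\sim 1$), just packaged differently. One cosmetic remark: your claimed decoupling into eight independent modes is exactly what the paper's displayed formula for $\bs{\cal{C}}_{\bs{M}_0^\zeta(0)}\bs{\cal{S}}[\bs{R}]$ confirms, so the ``secondary bookkeeping challenge'' you flag is in fact a finite computation with a clean answer, not a genuine obstacle.
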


\begin{proof}
Substituting the solution $\bs{ M}^{\zeta}_0(0)= -(\bs{A}^{\zeta})^{-1}$ gives
\[
\bs{\cal{C}}_{\bs{M}^{\zeta}_0(0)}\bs{\cal{S}}[\bs{R}] \,=\, 
\mfour
{\frac{1}{\abs{\zeta_2}^2}\scr{S}[R_{11}]  & 0 &\frac{1}{\ol\zeta_1 \zeta_2}\scr{S}[R_{13}]& 0}
{ 0 & \frac{1}{\abs{\zeta_1}^2}\scr{S}^*[R_{22}]  & 0 &\frac{1}{\ol\zeta_1 \zeta_2}\scr{S}^*[R_{24}] }
{\frac{1}{\zeta_1 \ol\zeta_2}\scr{S}[R_{31}]& 0 & \frac{1}{\abs{\zeta_1}^2}\scr{S}[R_{33}] & 0}
{0 & \frac{1}{\zeta_1 \ol\zeta_2}\scr{S}^*[R_{42}] & 0 & \frac{1}{\abs{\zeta_2}^2}\scr{S}^*[R_{44}]}\,.
\]
We define the unitary matrix
\[
\bs{{U}} \,:=\, 
\mfour
{0 & 0 & 0 & \frac{\ol\zeta_2}{\abs{\zeta_2}} }
{0 & 0 & \frac{\zeta_1}{\abs{\zeta_1}} & 0}
{ 0 & \frac{\ol\zeta_1}{\abs{\zeta_1}} & 0 & 0}
{ \frac{\zeta_2}{\abs{\zeta_2}} & 0 & 0 & 0}\,
\]
and the transformation matrix 
\[
\bs{V}\,:=\,
\mfour
{\frac{1}{\abs{\zeta_2}}\diag\sqrt{\frac{v_l}{v_r}} & 0 & 0 & 0}
{ 0 & \frac{1}{\abs{\zeta_1}}\diag\sqrt{\frac{v_r}{v_l}}& 0 & 0}
{0 & 0 & \frac{1}{\abs{\zeta_1}}\diag\sqrt{\frac{v_l}{v_r}} & 0}
{ 0 & 0 & 0&\frac{1}{\abs{\zeta_2}}\diag\sqrt{\frac{v_r}{v_l}}}\,.
\]

The following identities are easy to verify:
\[
\bs{\cal{C}}_{\bs{ M}^{\zeta}_0(0)}\,=\, \bs{\cal{C}}_{\!\sqrt{\bs{V}}}\,\bs{\cal{C}}_{\bs{U}}\2\cal{\bs{\cal{C}}}_{\!\sqrt{\bs{V}}}\,,\qquad 
 \bs{1} -\bs{\cal{C}}_{\bs{ M}^{\zeta}_0(0)}\bs{\cal{S}}\,=\, \bs{\cal{C}}_{\!\sqrt{\bs{V}}}\,( \bs{1} -\bs{\cal{C}}_{\bs{U}}\2\bs{\cal{F}})\bs{\cal{C}}_{\!\sqrt{\bs{V}}}^{-1}\,,
\]
where we introduce the operator 
\[
\bs{\cal{F}}\,:=\,\ \bs{\cal{C}}_{\!\sqrt{\bs{V}}} \2\bs{\cal{S}} \2 \bs{\cal{C}}_{\!\sqrt{\bs{V}}}\,.
\]

\begin{remark}
The operator $\bs{\cal{F}}$ first appeared in \cite{AjankiCorrelated}. In the current context we use it only in the limit $\Im(z) \to 0$.
\end{remark}

By direct calculation one sees that the eigenvalues with largest magnitude for the non-trivial irreducible components of the self-adjoint operator $\bs{\cal{F}}$ are $\pm \frac{1}{\abs{\zeta_i}\abs{\zeta_j}}$, for $i,j$ each equaling 1 or 2, with corresponding eigenmatrices 
\[ \bs{E}_{11}[\diag\sqrt{v_l v_r}] \pm \bs{E}_{44}[\diag\sqrt{v_l v_r}], 
\quad  \bs{E}_{22}[\diag\sqrt{v_l v_r}] \pm \bs{E}_{33}[\diag\sqrt{v_l v_r}], \] 
\[ \bs{E}_{13}[\diag\sqrt{v_l v_r}] \pm \bs{E}_{42}[\diag\sqrt{v_l v_r}],
  \quad  \bs{E}_{24} [\diag\sqrt{v_l v_r}] \pm \bs{E}_{31}[\diag\sqrt{v_l v_r}]. \]

Now using that $\bs{\cal{C}}_{\bs{U}}$ is a unitary operator we have the following bound
\begin{align*} 
&\norm{ ( \bs{1} -\bs{\cal{C}}_{\bs{ M}^{\zeta}_0(0)}\bs{\cal{S}})^{-1} }_{sp}\,\le\, \norm{\bs{V}}\norm{\bs{V}^{-1}} (1-\norm{\bs{\cal{F}}}_{sp})^{-1}\,\\
&\leq \, \frac{\max\{\abs{\zeta_1}, \abs{\zeta_2}\}}{\min\{\abs{\zeta_1}, \abs{\zeta_2}\}}\max\cbb{\frac{v_l}{v_r}, \frac{v_r}{v_l}}\frac{\min\{\abs{\zeta_1}^2,\abs{\zeta_2}^2\}}{\min\{\abs{\zeta_1}^2,\abs{\zeta_2}^2\}-1}\,,
\end{align*}
which is $O(\Delta_{\zeta}^{-1})$. This bound in spectral norm can immediately be extended to a bound in the norm induced by the operator norm on matrices by using inequality (4.55) in \cite{AjankiCorrelated}.

\end{proof}
Note that the final estimate used the uniform lower bound on the left and right Perron-Frobenius eigenvectors of $S$.

\subsection{Solution near 0}
\label{sec:solnear0}

In the following lemma we show the solution to the MDE can be analytically extended from the upper half-plane to a neighborhood of zero. We will continue to denote this extension by $\bs{M}_{\zeta}^{\alpha}(z)$.

\begin{lemma} \label{lem:nearzero}
Let $\zeta \in \Xi_{\infty}$. The solution $\bs{M}_{\zeta}^{\alpha}(z)$ to the MDE, \eqref{eq:MDE}, with $\alpha \in \R$ and $\Im(z) > 0$, has an analytic extension in $\alpha$ and $z$ to the entire region $\Upsilon_\zeta$. 
Moreover, for  $(\alpha,z)\in \Upsilon_\zeta$ the following hold:  
\begin{enumerate}
\item[(1)]  The extension  $\bs{M}^{\zeta}_{\alpha}(z)$ satisfies the MDE, \eqref{eq:MDE}, and the bound 
\begin{equation} \label{eq:contofM} \| \bs{M}^{\zeta}_0(0) - {\bs{M}}^{\zeta'}_\alpha(z)  \| \lesssim (|\alpha| + |z|+|\zeta - \zeta'|) \Delta_{\zeta}^{-1}   , \end{equation}
for any $\zeta' \in \Xi_{\infty}$ such that $|\zeta - \zeta'| \leq \kappa \Delta_{\zeta}^{2}.$
\item[(2)]  The inverse of the stability operator satisfies the bound 
\begin{equation} \label{eq:stabop}  \|   ( \bs{1} -\bs{\cal{C}}_{\bs{ M}^{\zeta}_\alpha(z)}\bs{\cal{S}})^{-1} \| \lesssim \Delta_{\zeta}^{-1}. \end{equation}
\item[(3)]  When $\alpha$ and $z$ are real, the solution $ {\bs{M}}^{\zeta}_\alpha(z)$ is real.
\end{enumerate}
\end{lemma}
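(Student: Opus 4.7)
The plan is to apply the quantitative analytic implicit function theorem (IFT) to the map $F(\bs{M};\alpha,z,\zeta):=\bs{M}^{-1}+z\bs{1}+\bs{A}^\zeta+\bs{\alpha}+\bs{\cal{S}}[\bs{M}]$ around the explicit base solution $\bs{M}_0:=-(\bs{A}^\zeta)^{-1}$, with the stability estimate $\|\bs{\cal{L}}_0^{-1}\|\lesssim \Delta_\zeta^{-1}$ from Lemma \ref{stabatzero} as the linchpin. Since a quantitative IFT produces a domain of existence whose radius is comparable to the inverse square of the norm of the linearized inverse, the radius is of order $\Delta_\zeta^2$, which matches the definition of $\Upsilon_\zeta$ for $\kappa$ chosen sufficiently small in terms of the model parameters.

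I would begin by verifying that $F(\bs{M}_0;0,0,\zeta)=0$: since $\bs{M}_0$ is block-antidiagonal---supported on the $(1,4),(2,3),(3,2),(4,1)$ blocks---while $\bs{\cal{S}}$ only reads the block-diagonal and the pairs $(1,3)/(3,1),(2,4)/(4,2)$ of its argument, one has $\bs{\cal{S}}[\bs{M}_0]=0$. The Fr\'echet derivative in $\bs{M}$ factors as $D_{\bs{M}}F[\bs{R}]=-\bs{M}_0^{-1}\bs{\cal{L}}_0[\bs{R}]\bs{M}_0^{-1}$, and since $\|\bs{M}_0\|=1/\min(|\zeta_1|,|\zeta_2|)\le 1$, Lemma \ref{stabatzero} gives $\|(D_{\bs{M}}F)^{-1}\|\lesssim \Delta_\zeta^{-1}$. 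The quadratic remainder $(\bs{M}_0+\bs{\Delta})^{-1}-\bs{M}_0^{-1}+\bs{M}_0^{-1}\bs{\Delta}\bs{M}_0^{-1}=O(\|\bs{\Delta}\|^2)$ (by Neumann expansion, for $\|\bs{\Delta}\|$ small) combines with the linear parameter shifts $z\bs{1}+\bs{\alpha}+(\bs{A}^\zeta-\bs{A}^{\zeta_0})$ into a remainder bounded by $O(\|\bs{\Delta}\|^2+|\alpha|+|z|+|\zeta-\zeta_0|)$. A standard contraction-mapping argument applied to $\bs{\Delta}\mapsto(D_{\bs{M}}F)^{-1}[\text{remainder}]$ then produces a unique analytic solution on $\Upsilon_\zeta$ satisfying the Lipschitz bound \eqref{eq:contofM} in item~(1). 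To identify this extension with the upper half-plane solution, I would use that on $\Upsilon_\zeta\cap\{\Im z>0\}$ both solutions satisfy the MDE, and that the canonical upper half-plane solution (with $\Im\bs{M}>0$) is continuous up to the real axis near the base point---ultimately because the self-consistent spectrum stays away from zero, which can be argued in the spirit of \cite{AEK-circ, CHNR} as the paper itself observes---hence enters the IFT neighborhood where uniqueness forces agreement.

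For item (2), I would expand $\bs{\cal{L}}_\alpha-\bs{\cal{L}}_0=(\bs{M}_0-\bs{M}^\zeta_\alpha(z))\bs{\cal{S}}[\genarg]\bs{M}^\zeta_\alpha(z)+\bs{M}_0\bs{\cal{S}}[\genarg](\bs{M}_0-\bs{M}^\zeta_\alpha(z))$, so $\|\bs{\cal{L}}_\alpha-\bs{\cal{L}}_0\|\lesssim\|\bs{M}_0-\bs{M}^\zeta_\alpha(z)\|\lesssim\kappa\Delta_\zeta$ by item~(1). For $\kappa$ small enough the product with $\|\bs{\cal{L}}_0^{-1}\|\lesssim\Delta_\zeta^{-1}$ stays below $1/2$, and a Neumann series yields $\|\bs{\cal{L}}_\alpha^{-1}\|\le 2\|\bs{\cal{L}}_0^{-1}\|\lesssim\Delta_\zeta^{-1}$, proving \eqref{eq:stabop}. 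For item (3), when $\alpha,z\in\R$ both $\bs{A}^\zeta$ and $\bs{\alpha}$ are Hermitian, and $\bs{\cal{S}}$ commutes with Hermitian conjugation, so $(\bs{M}^\zeta_\alpha(z))^*$ also solves the MDE; uniqueness within the IFT neighborhood then forces $\bs{M}^\zeta_\alpha(z)$ to be Hermitian, which is the natural meaning of ``real'' in this block-matrix setting.

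The main obstacle is tracking the $\Delta_\zeta$-dependence quantitatively through the IFT so that both the existence radius $\kappa\Delta_\zeta^2$ and the Lipschitz constant $\Delta_\zeta^{-1}$ come out with the correct exponents; this requires balancing the $\Delta_\zeta^{-1}$ blow-up of the linearized inverse against the size of the nonlinear remainder so that the contraction-mapping and self-mapping constants both stay below $1$ uniformly in $\zeta$ and $N$. The matching with the canonical upper half-plane solution, while conceptually standard, is the other place where some care is needed, as it implicitly uses that $0$ lies outside the support of the self-consistent density near the base point.
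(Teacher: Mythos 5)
Your proposal follows essentially the same architecture as the paper's proof: apply a quantitative implicit function theorem around the explicit base point $\bs{M}^\zeta_0(0)=-(\bs{A}^\zeta)^{-1}$, use Lemma~\ref{stabatzero} ($\|\bs{\cal{L}}_0^{-1}\|\lesssim\Delta_\zeta^{-1}$) as the linchpin, track the $\Delta_\zeta$-scaling so that the existence ball has radius $\sim\Delta_\zeta^2$ and the Lipschitz constant comes out $\sim\Delta_\zeta^{-1}$, then bootstrap item~(2) by Neumann perturbation and item~(3) by a Hermitian-conjugation symmetry plus local uniqueness. All of these steps match the paper, and your factorization $D_{\bs{M}}F[\bs{R}]=-\bs{M}_0^{-1}\bs{\cal{L}}_0[\bs{R}]\bs{M}_0^{-1}$ is correct. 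One cosmetic difference is the choice of functional: you work with $F(\bs{M})=\bs{M}^{-1}+z\bs{1}+\bs{A}^\zeta+\bs{\alpha}+\bs{\cal{S}}[\bs{M}]$ (which forces you to Neumann-expand $\bs{M}^{-1}$), while the paper multiplies the MDE through by $\bs{M}$ on the right and works with $\bs{\cal{T}}(\bs{\widetilde M},\bs{D})=(\bs{\cal{S}}[\bs{\widetilde M}]+\bs{A}^\zeta+\bs{D})\bs{\widetilde M}+\bs{I}$, a polynomial in $\bs{\widetilde M}$, so only one factor of $\bs{M}_0^{-1}$ appears in the derivative; this is immaterial since $\|\bs{M}_0\|,\|\bs{M}_0^{-1}\|\sim 1$ on $\Xi_\infty$.

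The one genuinely different step is how you match the IFT extension with the canonical upper half-plane solution. You appeal to continuity of the canonical solution up to the real axis, "in the spirit of \cite{AEK-circ, CHNR}." The paper instead closes this loop internally: it differentiates the MDE along the imaginary $z$-axis at $(\alpha,z)=(0,0)$ to get $\bs{\cal{L}}_0\bigl[\partial_\eta|_{\eta=0}\bs{\widetilde M}^\zeta_0(\ii\eta)\bigr]=\ii\bigl(\bs{M}^\zeta_0(0)\bigr)^2$, observes that $\bigl(\bs{M}^\zeta_0(0)\bigr)^2=(\bs{A}^\zeta)^{-2}$ is positive, and deduces $\Im\bs{\widetilde M}^\zeta_0(\ii\eta)>0$ for small $\eta>0$, so that the IFT branch coincides with the Helton--Rashidi-Far--Speicher solution near the real axis and then everywhere by analytic continuation. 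Both routes are valid (the paper explicitly flags yours as an alternative), but note that invoking "the self-consistent spectrum stays away from zero" as the reason for continuity is, inside this paper's logic, circular: that gap is Corollary~\ref{lem:gapineig}, which is proved as a consequence of part~(3) of this very lemma. If you wanted your approach to be self-contained, you would need to either cite the continuity statement directly from \cite{AEK-circ,CHNR} as a black box, or carry out the imaginary-axis derivative computation as the paper does.
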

Before proving this lemma we show that Lemma \ref{lem:alphaexp} follows as an immediate corollary.
\begin{proof}[Proof of Lemma \ref{lem:alphaexp}]
Lemma \ref{lem:nearzero} shows that $\bs{M}^{\zeta}_\alpha(0)$ is analytic in $\alpha$ and has a Taylor expansion in $\Upsilon_\zeta$:
\[ \bs{M}^{\zeta}_\alpha(0) = \bs{M}_0^{\zeta}(0)+\alpha \d_{\alpha} |_{\alpha=0} \bs{M}_{\alpha}^{\zeta}(0) + O(\alpha^2\Delta_{\zeta}^{-2}).\]
 The claim then follows from $\left( \bs{M}_{0}^\zeta(0)\right)_{31} = 0$.
\end{proof}

\begin{proof}[Proof of Lemma \ref{lem:nearzero}]

We apply the implicit function theorem (Lemma \ref{IFT}) with $\bs{\cal{T}}:\cal D \to \C^{4N\times4N}$ defined by
\begin{equation}  \bs{\cal{T}}(\bs{\widetilde M},\bs D) = ( \bs{\cal{S}}[\bs{\widetilde M}] + \bs{A}^{\zeta}+ \bs D)\bs{\widetilde M} + \bs I ,   \end{equation}
where $\cal D$ is the subset of $\C^{4N\times4N} \times \C^{4N\times4N}$ defined by
\[ \cal D: = \{ (\bs{\widetilde M}, \bs{D}):  \| \bs{\widetilde M} - \bs{M}^{\zeta}_0(0)\|\le c_M \Delta_\zeta, \| \bs{D}\|\le c_D \Delta_\zeta^2\} \]
for some $c_M, c_D$ sufficiently small.

The derivative in the first coordinate in the direction $\bs R$ is:
\begin{equation} \label{eq:deriv1} \nabla^{(1)}_R \bs{\cal{T}}(\bs{\widetilde M},\bs D) =  ( \bs{\cal{S}}[\bs{ \widetilde M}]+ \bs{A}^{\zeta} + \bs D)\bs  R + ( \bs{\cal{S}}[\bs R])\bs{ \widetilde M}.\end{equation}
At $\bs{ \widetilde M}=\bs{M}^{\zeta}_0(0),\bs D=\bs 0$ this simplifies to
\begin{equation}  \label{eq:deriv1Mo} \nabla^{(1)}_R \bs{\cal{T}}(\bs{M}^{\zeta}_0(0) ,\bs 0) =  -\bs{M}^{\zeta}_0(0)^{-1} (\bs{1}-\bs{\cal{C}}_{\bs{M}^{\zeta}_0(0)}\bs{\cal{S}}[\bs R]). \end{equation}
Combining \eqref{eq:deriv1}  with \eqref{eq:deriv1Mo} gives 
\[ \nabla^{(1)}_R \bs{\cal{T}}(\bs{ \widetilde M},\bs D) =  \bs{M}^{\zeta}_0(0)^{-1} (1-\bs{\cal{C}}_{\bs{M}^{\zeta}_0(0)}\bs{\cal{S}}[\bs R])  + ( \bs{\cal{S}}[\bs{ \widetilde M}-\bs{M}^{\zeta}_0(0)] + \bs D)  \bs R + \bs{\cal{S}}[\bs R] (\bs{ \widetilde M}-\bs{M}^{\zeta}_0(0)) . \]

The derivative in the second coordinate in the direction $\bs R$ is 
\[ \nabla^{(2)}_R\bs{\cal{T}}(\bs{ \widetilde M},\bs D) =  \bs R \bs{ \widetilde M} . \]

Thus the operator in the first part of \eqref{eq:IFT} is  
\begin{equation}\label{IFT1} (\bs{1}-\bs{\cal{C}}_{\bs{M}^{\zeta}_0(0)}\bs{\cal{S}})^{-1}\bs{M}^{\zeta}_0(0)\left(( \bs{\cal{S}}[\bs{ \widetilde M}-\bs{M}^{\zeta}_0(0)] + \bs D)  \cdot + \bs{\cal{S}}[\cdot] (\bs{ \widetilde M}-\bs{M}^{\zeta}_0(0))   \right)  \end{equation}
where $\cdot$ indicates the insertion of the $4N \times 4N$ matrix argument. 
 
Using the bounds on $\bs{\widetilde M} $,  $\bs D$ from the definition of $\cal{D}$ along with the bounds $\|\bs{M}^{\zeta}_0(0)\| \leq \max\{1/|\zeta_1|,1/|\zeta_2|\} \leq 1$ and $\| \cal{L}_0^{-1} \| \lesssim \Delta_\zeta^{-1}$ we conclude the norm of operator \eqref{IFT1} is $ \lesssim \Delta_{\zeta}^{-1} (\Delta_\zeta+ \Delta_\zeta^2)$, and the implied constant can be made sufficiently small with appropriate choice of $c_M$ and $c_D$.

The second inequality of \eqref{eq:IFT} is satisfied as the norms of operators are bounded by
\[ 2 c_D \Delta^2_\zeta \|  \bs{M}^{\zeta}_0(0) (\bs{1}-\bs{\cal{C}}_{\bs{M}^{\zeta}_0(0)}\bs{\cal{S}}) ^{-1}    \|  \|  \bs{ \widetilde M}   \|    \lesssim  \Delta_\zeta^2 (\Delta_{\zeta}^{-1})( 1+ \Delta_\zeta)  \lesssim  \Delta_\zeta. \]
Once again the implied constant can be made sufficiently small with appropriate choice of $c_D$.
Thus by the implicit function theorem we conclude for each $\bs{D} = z \bs{I} + \bs{\alpha}+ \bs{A}^\zeta-\bs{A}^{\zeta'}$ there is a unique $\bs{\widetilde M}^{\zeta'}_\alpha(z)$ solving \eqref{eq:MDE} in a neighborhood $\bs{M}_0^\zeta(0)$. Additionally, the solution depends analytically on $\bs{D}$ and satisfies the bound 
\begin{equation} \label{eq:contMtil}
\| \bs{M}^{\zeta}_0(0) - \bs{\widetilde M}^{\zeta'}_\alpha(z) \| \lesssim  (|z|+|\alpha|+| \zeta - \zeta'| )\Delta_{\zeta}^{-1} . \end{equation}
Below we will show that $\bs{\widetilde M}^{\zeta}_\alpha(z)$ has positive imaginary part when $\Im(z+ \bs{A}^{\zeta}+ \bs{\alpha}) >0$, which by uniqueness of the solution to the MDE implies $\bs{\widetilde M}^{\zeta}_\alpha(z) = \bs{ M}^{\zeta}_\alpha(z)$ and concludes the proof of part (1).

Let \[ \widetilde{ \bs{\cal{L}}}_\alpha :=\bs{1} - \bs{M}_{\alpha}^{\zeta}(z)\bs{\cal{S}}[\cdot]\bs{M}_{\alpha}^{\zeta}(z) = \bs{1} - \bs{\cal{C}}_{{\bs{\widetilde M}^{\zeta}_\alpha(z)}}\bs{\cal{S}}  \] be the stability operator, associated to $\bs{\widetilde M}^{\zeta}_\alpha(z) $. We now use \eqref{eq:contMtil} to estimate its inverse in $\cal{D}$ by perturbation theory. Taking the inverse of 
\begin{align*} &\widetilde{ \bs{\cal{L}}}_\alpha = \left( \bs{1} - 
( \bs{\cal{C}}_{{\bs{\widetilde M}^{\zeta}_\alpha(z)}}\bs{\cal{S}} - \bs{\cal{C}}_{\bs{M}^{\zeta}_0(0)}\bs{\cal{S}} )
 \bs{\cal{L}}_0^{-1}\right) ( \bs{1} - \bs{\cal{C}}_{\bs{M}^{\zeta}_0(0)}\bs{\cal{S}}) 
\end{align*}

and choosing $c_M$ and $c_D$ such that $\| \bs{M}^{\zeta}_0(0) - \bs{M}^{\zeta}_\alpha(z) \|$ is sufficiently small gives the bound:
\begin{equation} \label{eq:Mtilstab}
\| \widetilde{ \bs{\cal{L}}}_\alpha^{-1}\| \lesssim \Delta_{\zeta}^{-1}
\end{equation}
which will prove part (2) once we verify $\bs{\widetilde M}^{\zeta}_\alpha(z) = \bs{ M}^{\zeta}_\alpha(z)$ in $
\Upsilon_\zeta$.

We now show the derivative along the imaginary axis of $\bs{ M}^{\zeta}_0(0) $ is positive, implying $\Im( \bs{\widetilde M}^{\zeta}_\alpha(\ii \eta) )$ is positive on the imaginary axis in a neighborhood of $0$ and thus agrees with $\bs{M}^{\zeta}_\alpha(z) $ in this region and then the upper complex plane by analytic continuation. Taking the derivative in $\eta$ of \eqref{eq:MDE} gives:

\[ \partial_\eta|_{\eta=0} \bs{\widetilde  M}^{\zeta}_0(\ii \eta) =   \bs{ M}^{\zeta}_0(0)  \bs{\cal{S}} \left[ \partial_\eta|_{\eta=0} \bs{\widetilde  M}^{\zeta}_0(\ii \eta) \right] \bs{  M}^{\zeta}_0(0) + \ii \left( \bs{\widetilde  M}^{\zeta}_0(0)  \right)^2 \]
then using the formula for $ \bs{ M}^{\zeta}_0(0)$ verifies the claim.

For the remainder of the proof we use $\bs{M}^{\zeta}_\alpha(z)$ to denote the analytic extension of the solution to the MDE to $\Upsilon_\zeta$.

To prove part (3) we follow the argument in the appendix of \cite{AEKN}. We reproduce it here using our bounds on the stability operator.
From the MDE we see that the difference $\bs{M}^{\zeta}_\alpha(0)  -   \bs{M}^{\zeta}_\alpha(z) $ satisfies: 
\begin{align} \label{Delta} 
\bs{\cal{L}}_0[\bs{M}^{\zeta}_0(0)  -   \bs{M}^{\zeta}_\alpha(z) ] =&    \bs{M}^{\zeta}_0(0)(z \bs{I} +\bs{\alpha})    \bs{M}^{\zeta}_0(0) + \frac{1}{2}\left(\bs{M}^{\zeta}_0(0) \bs{\cal{S}}[\bs{M}^{\zeta}_0(0) -   \bs{M}^{\zeta}_\alpha(z) ](\bs{M}^{\zeta}_0(0)  -   \bs{M}^{\zeta}_\alpha(z) )\right. \nonumber \\
& \left.+ (\bs{M}^{\zeta}_0(0)  -   \bs{M}^{\zeta}_\alpha(z) ) \bs{\cal{S}}[\bs{M}^{\zeta}_0(0)  -   \bs{M}^{\zeta}_\alpha(z) ] \bs{M}^{\zeta}_0(0)   \right)   
\end{align}
since the operator $\bs{\cal{L}}_0$ is invertible. Applying $\bs{\cal{L}}_{0}^{-1}$ to \eqref{Delta} and using the implicit function theorem, as above but applied to the subspace of self-adjoint matrices, shows $\bs{M}^{\zeta}_0(0)  -   \bs{M}^{\zeta}_\alpha(z)  = (\bs{M}^{\zeta}_0(0)  -   \bs{M}^{\zeta}_\alpha(z) )^*$, or $\Im(\bs{M}^{\zeta}_\alpha(z) )=0$ since $\Im (\bs{M}^{\zeta}_0(0) )=0$.
\end{proof}

\begin{corollary} \label{lem:gapineig}
Let $\alpha$ be such that $ (\alpha,0) \in \Upsilon_\zeta$, then

\[ \dist(\supp(\rho^{\alpha,\zeta}),0) > \kappa \Delta_\zeta^2 .\]
\end{corollary}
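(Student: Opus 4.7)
The plan is to show that each spectral measure $v_j^{\zeta,\alpha}$ appearing in the definition of $\rho^{\zeta,\alpha}$ vanishes on the interval $(-\kappa\Delta_\zeta^2,\kappa\Delta_\zeta^2)$ by reading analyticity of its Stieltjes transform off the extension provided by Lemma~\ref{lem:nearzero}. Recall that for each $j \in \{1,\dots,N\}$ and each pair of block indices $(k,l)$, the $(k,l)$-entry of $m_j^{\zeta,\alpha}(z)$ is the $(j,j)$ diagonal entry of the $(k,l)$-block of $\bs{M}^{\zeta}_\alpha(z)$, and by construction
\[
m_j^{\zeta,\alpha}(z) \,=\, \int_{\R}\frac{v_j^{\zeta,\alpha}(\dd\lambda)}{\lambda-z},
\]
so $m_j^{\zeta,\alpha}$ is the Stieltjes transform of the matrix-valued measure $v_j^{\zeta,\alpha}$, originally defined only for $\Im(z)>0$.

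First I would invoke part~(1) of Lemma~\ref{lem:nearzero}: the solution $\bs{M}^{\zeta}_\alpha(z)$ to the MDE admits an analytic extension in $z$ to the entire region $\Upsilon_\zeta$, and in particular to the disk $\{z\in\C:|z|<\kappa\Delta_\zeta^2\}$ for the given value of $\alpha$. Reading off the appropriate block-diagonal entries, each $m_j^{\zeta,\alpha}(z)$ extends analytically to this disk, and the extension agrees with the boundary values of $m_j^{\zeta,\alpha}$ from the upper half-plane. Consequently, $m_j^{\zeta,\alpha}$ extends analytically across the real interval $I_\zeta:=(-\kappa\Delta_\zeta^2,\kappa\Delta_\zeta^2)$.

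Next I would use the standard fact that a positive (matrix-valued) Borel measure on $\R$ whose Stieltjes transform extends analytically across an open real interval $I$ must assign mass zero to $I$. The quickest justification is the Stieltjes inversion formula: for any $a<b$ in $I$,
\[
v_j^{\zeta,\alpha}\pa{(a,b)} \,=\, \lim_{\eta\downarrow 0}\frac{1}{\pi}\int_a^b \Im\, m_j^{\zeta,\alpha}(E+\ii\eta)\,\dd E,
\]
but $\Im\,m_j^{\zeta,\alpha}(E+\ii\eta)\to 0$ uniformly on $[a,b]$ as $\eta\downarrow 0$ since $m_j^{\zeta,\alpha}$ is analytic and real on a neighborhood of $[a,b]\subset I_\zeta$ (reality on the real axis follows from part~(3) of Lemma~\ref{lem:nearzero}, applied to real $z$). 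Thus $v_j^{\zeta,\alpha}(I_\zeta)=0$ for every $j$, and averaging over $j$ gives $\rho^{\zeta,\alpha}(I_\zeta)=0$, i.e.\ $\dist(\supp(\rho^{\zeta,\alpha}),0)\ge\kappa\Delta_\zeta^2$.

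There is no real obstacle here once Lemma~\ref{lem:nearzero} is in hand; the only thing to be careful about is matching the analytic extension to the upper half-plane solution. That is guaranteed by the uniqueness argument at the end of the proof of Lemma~\ref{lem:nearzero}, where $\bs{\widetilde M}^{\zeta}_\alpha(z)$ was identified with $\bs{M}^{\zeta}_\alpha(z)$ in $\Upsilon_\zeta$, together with the reality statement of part~(3) which ensures the extension takes real values on the real axis inside $\Upsilon_\zeta$ and hence has vanishing imaginary part there.
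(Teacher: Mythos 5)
Your proposal is correct and follows essentially the same route as the paper: invoke Lemma~\ref{lem:nearzero} parts~(1) and~(3) to conclude that the analytic extension of $\bs{M}^{\zeta}_\alpha$ is real on the real interval $(-\kappa\Delta_\zeta^2,\kappa\Delta_\zeta^2)$, hence the self-consistent density of states vanishes there. The paper states this in two sentences while you spell out the Stieltjes inversion step; that added detail is fine and not a deviation in method.
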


\begin{proof}

By Lemma \ref{lem:nearzero}, part (3), the imaginary part of $\bs{M}^{\zeta}_\alpha(z)$ is zero when $(\alpha, z) \in \Upsilon_\zeta$. Therefore at any such $z$, the self-consistent density of states, $\rho^{\alpha,\zeta}$, is zero and we conclude the desired bound. 
\end{proof}

\subsection{Computation of $ \d_{\alpha} \bs{M}_\alpha^{\zeta}(0)$  at $\alpha=0$}
\label{sec:compderiv}

We now prove Lemma \ref{lem:derivalpha}. To compute $\d_{\alpha} |_{\alpha=0} \bs{M}_{\alpha}^{\zeta}(0)$ in terms of $\bs{M}_0^{\zeta}(0)$ we differentiate \eqref{eq:MDE} in $\alpha$.

\begin{proof}[Proof of Lemma \ref{lem:derivalpha}]

Since this proof is performed at $ z = 0$ we omit the $z$-dependence from our notation.

By differentiating \eqref{eq:MDE} with respect to $\alpha$, then setting $\alpha$ equal to zero and rearranging we arrive at
\begin{equation}\label{eq:expforM1}   \d_{\alpha}|_{\alpha=0}  \bs{M}_{\alpha}^{\zeta} =  \bs{\cal{L}}_0^{-1}(\bs{M}^{\zeta}_0(\bs{E}_{24} +\bs{E}_{42})\bs{M}^{\zeta}_0 ) . \end{equation}

By substituting $\bs{M}^{\zeta}_0 = -(\bs{A}^{\zeta} )^{-1}$ and using that only the $(2,4)$-entry is mapped by $\bs{\cal{L}}_0^{-1}\bs{\cal{C}}_{\bs{ M}^{\zeta}_0}$ to the $(3,1)$-entry, a short calculation gives the expression 
\[   \d_{\alpha}|_{\alpha=0} \bs{E}^*_3 \bs{M}_{\alpha}^{\zeta} \bs{E}_1 =  \left( \zeta_1 \ol\zeta_2 - \scr{S} \right)^{-1} I = \diag( ( \zeta_1 \ol\zeta_2 - S)^{-1} \mathbbm{1} ) \]
and taking the trace leads to the desired expression.
\end{proof}

Recall $S[v_r]=v_r$ and $S^*[v_l]=v_l$ are left and right eigenvectors for $S$. Furthermore define the spectral projection corresponding to the complement of the Perron-Frobenius eigenvalue.
\[
Q[r]\,:=\, r- \frac{\scalar{v_l}{r}}{\scalar{v_l}{v_r}}\,v_r\,.
\]
Then we further expand
\begin{equation}\label{derivdecomp}
( \zeta_1 \ol\zeta_2 - S)^{-1}  \mathbbm{1}
\,=\,
\frac{1}{\zeta_1 \ol\zeta_2  -1}\frac{\avg{v_l} }{\scalar{v_l}{v_r}}\, v_r+
Q ( \zeta_1 \ol\zeta_2  -S )^{-1}Q \mathbbm{1} \,.
\end{equation}

In particular, if the variance profile is constant, we have,
\[  \tr_N( \d_{\alpha}|_{\alpha=0}   \bs{E}^*_3  \bs{M}_{\alpha}^{\zeta} \bs{E}_1 ) = \frac{1}{\zeta_1 \ol\zeta_2 - 1  } .\]

\subsection{Local Law} 
\label{sec:locallaw}
We now use the local law from \cite{AEKN} to show that when $ \dist(\supp(\rho^{\alpha,\zeta}),0)$ is bounded away from zero, $\bs{M}_{\alpha}^\zeta(z)$ is indeed a good approximation of $\bs{G}_{\alpha}^\zeta(z)$. We first show that Corollary \ref{lem:gapineig} can be combined with \cite{AEKN} to show there are no eigenvalues of $\bs{H}_\alpha^{\zeta} $ near zero.

\begin{lemma} \label{lemma:noeigs}
There is a universal constant $\delta>0$ such that  
\[\P( \spec |\bs{H}_\alpha^{\zeta}| \subset [\kappa \Delta_{\zeta}^2 /2,\infty)  \; : \; \forall \zeta \in \Xi_{\delta}, (\alpha,0) \in \Upsilon_\zeta )> 1 - \frac{C_D}{N^D}\] 
for any $D$ and some positive constant $C_D$.
\end{lemma}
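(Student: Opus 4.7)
The plan is to combine the gap in the self-consistent density of states from Corollary \ref{lem:gapineig} with the local law of \cite{AEKN} applied to the Hermitized block matrix $\bs{H}_\alpha^\zeta$. The matrix $\bs{H}_\alpha^\zeta$ has entries that are linear combinations of the entries of $X$ and deterministic constants, so it fits into the general Hermitian random matrix framework of \cite{AEKN}. First I would verify that the required flatness/boundedness assumptions on its variance structure hold uniformly in $(\alpha,\zeta)$ (this follows easily from Assumption \ref{assum:randmat} together with Lemma \ref{lem:nearzero}, which controls $\bs{M}_\alpha^\zeta$ and the stability operator with polynomial dependence on $\Delta_\zeta^{-1}$).

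The pointwise argument is standard. Fix $\zeta\in\Xi_\delta$ and $\alpha$ with $(\alpha,0)\in\Upsilon_\zeta$, and suppose by contradiction that $\bs{H}_\alpha^\zeta$ has an eigenvalue $\lambda$ with $|\lambda|\le \kappa\Delta_\zeta^2/2$. Choose $\eta=N^{-1+\epsilon}$ for a small $\epsilon>0$. Since
\[
\Im\tr_{4N}\bs{G}_\alpha^\zeta(\lambda+\ii\eta)\,=\,\frac{1}{4N}\sum_k \frac{\eta}{(\lambda-\lambda_k)^2+\eta^2}\,\ge\,\frac{1}{8N\eta}\,=\,\frac{N^{-\epsilon}}{8},
\]
a lower bound of order $N^{-\epsilon}$ would be forced on the imaginary part of the resolvent at the spectral parameter $z=\lambda+\ii\eta$. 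On the other hand, Corollary \ref{lem:gapineig} shows that $\rho^{\alpha,\zeta}$ vanishes on $[-\kappa\Delta_\zeta^2,\kappa\Delta_\zeta^2]$, and since the point $\lambda$ lies well inside this gap, a Stieltjes transform estimate based on the compact support of the measures $v_j^{\zeta,\alpha}$ yields $\Im\tr_{4N}\bs{M}_\alpha^\zeta(\lambda+\ii\eta)\lesssim \eta/\Delta_\zeta^4$, which is of order $N^{-1+\epsilon}\Delta_\zeta^{-4}$. The entrywise local law of \cite{AEKN}, adapted through Lemma \ref{lem:locallaw}-type arguments to also cover the normalized trace, provides a bound of the form
\[
\bigl|\tr_{4N}\bs{G}_\alpha^\zeta(\lambda+\ii\eta)-\tr_{4N}\bs{M}_\alpha^\zeta(\lambda+\ii\eta)\bigr|\,\lesssim\,\frac{N^\epsilon}{N\eta\,\Delta_\zeta^q}\,=\,\frac{N^{2\epsilon}}{N\,\Delta_\zeta^q}
\]
for some fixed power $q$, with the $\Delta_\zeta^{-q}$ factor coming from the bound \eqref{eq:stabop} on the stability operator. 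Choosing $\delta>0$ small enough that $N^{-2\delta q}\gg N^{-2\epsilon}$, the right-hand side is much smaller than $N^{-\epsilon}$, contradicting the lower bound and thus ruling out eigenvalues in the gap.

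To promote the pointwise statement to the uniform one required in the lemma, I would introduce a polynomial grid $\mathcal{N}\subset \Xi_\delta\times\{\alpha:(\alpha,0)\in\Upsilon_\zeta\}$ of cardinality $N^C$ with spacing $N^{-C'}$ for suitable $C,C'$, apply the local law and the above Stieltjes transform bound at each grid point, and conclude by a union bound that with probability at least $1-C_D N^{-D}$ the gap property holds simultaneously at every grid point. The final step is a continuity argument: the eigenvalues of $\bs{H}_\alpha^\zeta$ depend Lipschitz continuously on $(\alpha,\zeta)$ with a Lipschitz constant that is at most polynomial in $N$ (since the entries are polynomial in $\zeta,\alpha$ of bounded norm), so refining the grid enough guarantees that the gap persists on all of $\Xi_\delta$.

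The main obstacle is bookkeeping the $\Delta_\zeta$-dependence consistently across the three ingredients (stability of the MDE, local law error, and the grid argument) so that all polynomial losses $\Delta_\zeta^{-q}$ can be absorbed for a single small universal $\delta>0$. A secondary technical point is checking that the local law of \cite{AEKN} applies in the required form: it is stated for spectral parameters in the upper half-plane, and here I use it near the real axis at $\eta=N^{-1+\epsilon}$, which is at the edge of what the theorem allows and requires that the self-consistent density is well behaved in the relevant window—this is exactly what Lemma \ref{lem:nearzero} and Corollary \ref{lem:gapineig} provide.
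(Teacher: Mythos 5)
Your high-level strategy (gap in $\supp\rho^{\alpha,\zeta}$ from Corollary \ref{lem:gapineig}, apply the local law machinery of \cite{AEKN} to conclude no eigenvalues of $\bs{H}_\alpha^\zeta$ near zero, extend to all of $\Xi_\delta$ by a polynomial grid, union bound, and a continuity step) is precisely the structure of the paper's proof. The paper simply invokes Theorem 4.7 of \cite{AEKN} as a black box for the "no eigenvalues outside the support" step, whereas you try to re-derive that step from the local law plus a Stieltjes-transform contradiction, and that is where the gap appears.

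Concretely: if $\lambda$ were an eigenvalue in the gap and one takes $\eta=N^{-1+\epsilon}$, the lower bound $\Im\tr_{4N}\bs{G}(\lambda+\ii\eta)\gtrsim (N\eta)^{-1}=N^{-\epsilon}$ is correct. But the local law error you then invoke has the form $N^\epsilon/(N\eta\,\Delta_\zeta^q)$, and with $\eta=N^{-1+\epsilon}$ this equals $\Delta_\zeta^{-q}$, not $N^{2\epsilon}/(N\Delta_\zeta^q)$ as you wrote; in particular it is \emph{not} small (it is $\approx N^{\delta q}\gg 1$), so it does not contradict the $N^{-\epsilon}$ lower bound. This is not a cosmetic slip: the generic local law error $\sim(N\eta)^{-1}$ is exactly comparable to the eigenvalue lower bound at every scale, so the naive contradiction argument never closes. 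What is actually needed is the improved bound that holds \emph{outside} the self-consistent support, where the averaged error is roughly $N^\epsilon/N$ (no $\eta^{-1}$), and establishing that improvement in the gap — together with the bootstrapping in $\eta$ that lets you know a priori that you are outside the spectrum — is exactly the content of the "no eigenvalues outside the support" theorem. The paper sidesteps all of this by citing \cite{AEKN} Theorem 4.7 directly; you should do the same, or else supply the improved outside-spectrum local law explicitly rather than the generic $1/(N\eta)$ form.

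Two smaller remarks. First, the paper's grid-to-continuum extension uses the second resolvent identity on $\bs{G}_\alpha^\zeta(0)$ to propagate the bound $\|\bs{G}_\alpha^\zeta(0)\|\lesssim\Delta_\zeta^{-2}$ from the grid and then uses $\spec|\bs{H}_\alpha^\zeta|\subset[\|\bs{G}_\alpha^\zeta(0)\|^{-1},\infty)$; your Weyl/Lipschitz argument on eigenvalues is an acceptable and essentially equivalent alternative. Second, your preliminary remark about verifying the flatness hypotheses of \cite{AEKN} for the linearized model is legitimate and is implicitly used in the paper as well.
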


\begin{proof}
For any fixed  $\zeta\in \Xi_{\delta}$ and  $\alpha$ such that $ (\alpha,0) \in \Upsilon_\zeta$, Corollary \ref{lem:gapineig} gives a universal constant such that $\supp \rho^{\alpha,\zeta}$ is contained in the interval  $[\kappa \Delta_{\zeta}^2, \infty) $. Combining this with an appropriate choice of $\delta$ in $\Xi_\delta$, Theorem 4.7 in \cite{AEKN} gives
\[\P\Big( \spec |\bs{H}_\alpha^{\zeta}| \subset [\kappa \Delta_{\zeta}^2 /2,\infty)  \Big)> 1 - \frac{C_D}{N^D}.\]

This argument holds for every element of a
regular grid  $\Gamma $ with polynomial size in $N$ in the set of $(\zeta,\alpha)$ such that 
 $\zeta \in \Xi_{\delta}$, $(\alpha,0) \in \Upsilon_\zeta$.
Thus by a simple union bound we have
 \[\P\Big( \spec |\bs{H}_\alpha^{\zeta}| \subset [\kappa \Delta_{\zeta}^2 /2,\infty)  \; : \; (\zeta,\alpha)\in \Gamma
   \Big)> 1 - \frac{C_D}{N^D}.\] 
On this grid, $\Gamma$, the lower bound on the spectrum of $\bs{H}_\alpha^{\zeta}$ implies the norm of the resolvent, $\bs{G}_{\alpha}^{\zeta} $, is $O(\Delta_{\zeta}^{-2})$ with high probability. 

Finally to extend the bound to arbitrary ${\alpha},{\zeta}$, we take the nearest $({\alpha'},{\zeta'})\in \Gamma$. Since $\bs{G}_{\alpha}^{\zeta}(0) = (1 - (\bs{\alpha}'-\bs{\alpha}+\bs{A}^{\zeta'}-\bs{A}^{\zeta}) \bs{G}_{\alpha'}^{\zeta'}(0) )^{-1} \bs{G}_{\alpha'}^{\zeta'}(0) $, choosing the grid to be sufficiently fine, we have the bound $\| \bs{G}_{\alpha}^{\zeta}(0) \|  \lesssim \Delta_{\zeta}^{-2}$ whenever this bound holds on the grid. Using that $\spec |\bs{H}_\alpha^{\zeta}| \subset [\|\bs{G}_{\alpha}^{\zeta}(0)\|^{-1}, \infty)$ concludes the proof.
\end{proof}

\begin{proof}[Proof of Lemma \ref{lem:locallaw}]
For any fixed  $\zeta\in \Xi_{\delta}$ and  $\alpha$ such that $ (\alpha,0) \in \Upsilon_\zeta$
from the local law Lemma B.1 (ii) of \cite{AEKN} there exists a $p>0$ such that 
\[\P\left(  \left|  \tr_N( \bs{E}_3^* \bs{M}^{\zeta}_\alpha(z)  \bs{E}_1 )  -  \tr_N(  \bs{E}_3^* \bs{G}^{\zeta}_\alpha(z)  \bs{E}_1  )  \right|  \le  \frac{N^{\epsilon} }{N \Delta_\zeta^p  }  \right)  \ge 1- \frac{C_{\epsilon,D}}{N^D}. \]
To extend the proof to all  $\zeta\in \Xi_{\delta}$ and  $\alpha$ such that $ (\alpha,0) \in \Upsilon_\zeta$ we follow the same grid argument as in Lemma \ref{lemma:noeigs}. The necessary Lipschitz continuity of $\tr_N( \bs{E}_3^* \bs{M}^{\zeta}_\alpha(z)  \bs{E}_1 ) $ was established in Lemma \ref{lem:nearzero} part (1).
\end{proof}
We also have the following corollary of Lemma \ref{lemma:noeigs}.
\begin{corollary} \label{corr:resolventbound}
There is a universal constant $\delta>0$ and a constant $C>0$ such that  
\[\P\Big(  \big\| (X-\zeta_1)^{-1}  (X^*-\ol{\zeta}_2)^{-1}\big\| \le  C N^{4\delta}  \; : \; \forall \zeta \in \Xi_{\delta}  \Big)> 1 - \frac{C_D}{N^D},\] 
 for any $D$ and some positive constant $C_D$. 
\end{corollary}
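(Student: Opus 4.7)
The plan is to deduce the bound directly from Lemma \ref{lemma:noeigs} applied with the parameter choice $\alpha = 0$. Note that $(0,0) \in \Upsilon_\zeta$ for every $\zeta \in \Xi_\delta$, so the high-probability event in Lemma \ref{lemma:noeigs} guarantees that, simultaneously for all $\zeta \in \Xi_\delta$,
\[
\spec |\bs{H}_0^\zeta| \subset [\kappa \Delta_\zeta^2/2, \infty).
\]

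The first step is to translate this spectral gap for $\bs{H}_0^\zeta$ into an operator bound on $(L_0^\zeta)^{-1}$. Since $\bs{H}_0^\zeta$ is the Hermitization of $L_0^\zeta$, its eigenvalues are precisely the signed singular values of $L_0^\zeta$, and therefore the above inclusion says $\sigma_{\min}(L_0^\zeta) \geq \kappa \Delta_\zeta^2/2$, i.e. $\|(L_0^\zeta)^{-1}\| \leq 2/(\kappa \Delta_\zeta^2)$. Because $L_0^\zeta$ is block anti-diagonal (with $\alpha = 0$), its singular values are the union of those of $X - \zeta_1$ and of $(X - \zeta_2)^*$, so in particular
\[
\sigma_{\min}(X - \zeta_1) \geq \tfrac{\kappa}{2} \Delta_\zeta^2, \qquad \sigma_{\min}(X^* - \ol{\zeta}_2) \geq \tfrac{\kappa}{2} \Delta_\zeta^2,
\]
and consequently
\[
\|(X-\zeta_1)^{-1}\| \leq \frac{2}{\kappa \Delta_\zeta^2}, \qquad \|(X^* - \ol{\zeta}_2)^{-1}\| \leq \frac{2}{\kappa \Delta_\zeta^2}.
\]

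The second step is to multiply the two resolvent bounds and invoke the definition of $\Xi_\delta$. Submultiplicativity of the operator norm gives
\[
\|(X-\zeta_1)^{-1}(X^* - \ol{\zeta}_2)^{-1}\| \leq \frac{4}{\kappa^2 \Delta_\zeta^4}.
\]
On $\Xi_\delta$ we have $\Delta_\zeta \geq N^{-\delta}$, so $\Delta_\zeta^{-4} \leq N^{4\delta}$ and the right-hand side is bounded by $C N^{4\delta}$ with $C = 4/\kappa^2$. Taking the intersection with the high-probability event from Lemma \ref{lemma:noeigs} yields the claim for the same universal $\delta$ and the same $C_D$.

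There is no real obstacle here; the only thing to check is the translation between the spectral gap of the Hermitization $\bs{H}_0^\zeta$ and the smallest singular values of the two shifted matrices $X - \zeta_1$ and $X - \zeta_2$, which is immediate from the block-antidiagonal structure of $L_0^\zeta$.
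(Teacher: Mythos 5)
Your proof is correct and follows the route the paper intends but leaves implicit: apply Lemma \ref{lemma:noeigs} with $\alpha=0$, translate the spectral gap of the Hermitization $\bs{H}_0^\zeta$ into lower bounds on the smallest singular values of $X-\zeta_1$ and $X-\zeta_2$ via the block-antidiagonal structure of $L_0^\zeta$, and combine by submultiplicativity with $\Delta_\zeta\geq N^{-\delta}$ on $\Xi_\delta$. The paper states the corollary as an immediate consequence of Lemma \ref{lemma:noeigs} without spelling out these steps, so your argument fills in exactly the expected reasoning.
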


\subsection{Bound on Linearization}
\label{sec:linear}

\begin{proof}[Proof of Lemma \ref{lem:linear}]

From the block structure of $\bs{H}^{\zeta}_\alpha$ we deduce the following block structure of $\bs{G}^{\zeta}_\alpha(\ii \eta)$
\[
\bs{G}^{\zeta}_\alpha(\ii \eta)\,=\, 
\mtwo{-\ii\eta & L^{\zeta}_\alpha}{(L^{\zeta}_\alpha)^* & -\ii\eta}^{-1}\,=\, \mtwo{\ii\eta(\eta^2+LL^*)^{-1} & (\eta^2+LL^*)^{-1}L}{L^*(\eta^2+LL^*)^{-1} & \ii\eta(\eta^2+L^*L)^{-1}}\,.
\]
We use the $\eta>0$ regularization of the inverse to ensure all inverses exist in our derivation. In the final estimate we will invoke the assumption that $ \spec{|\bs{H}_{\alpha}^{\zeta} |} \subset [\kappa \Delta_{\zeta}^2 /2, \infty)$ and safely set $\eta=0$.

We compute the $(3,1)$-block of $\bs{G}$ through
\bes{
&\bs{E}_3^*\bs{G}^{\zeta}_\alpha(\ii \eta)\1\bs{E}_1\\
\,&=\, 
E_1^* \mtwo{0 & (X-\zeta_1)^*}{X- \zeta_2 & -\alpha}\mtwo{\eta^2+(X-\zeta_2)^*(X-\zeta_2)& -\alpha (X-\zeta_2)^*}{-\alpha (X-\zeta_2) & \eta^2+\alpha^2+(X-\zeta_1)(X-{\zeta_1})^*}^{-1}E_1 \\
\,&=\,
\alpha (X-\zeta_1)^* \left(\eta^2+\alpha^2+(X-\zeta_1) (X-\zeta_1)^* \right)^{-1} 
(X- \zeta_2) \left( \eta^2+(X-\zeta_2)^* J (X-\zeta_2) \right)^{-1}, \\
}
where \[J = \frac{\eta^2+(X-\zeta_1)(X-\zeta_1)^*}{\eta^2+\alpha^2+(X-\zeta_1)(X-\zeta_1)^*}\]
and \[
E_1\,=\, \vtwo{1}{0} \in \C^{2n\times n}\,.
\]

To estimate the difference when $\alpha > 0$ is small and to justify setting $\eta=0$ we use the estimates:
\[\| (X-\zeta_i)^{-1} \| \leq \dist(\spec |\bs{H}_{0}^{\zeta}|,0)^{-1},  \]
\[J-I= \alpha^2( \eta^2+\alpha^2+(X-\zeta_1)(X^*-\ol\zeta_1) )^{-1} 
\leq \alpha^2 \dist(\spec |\bs{H}_{0}^{\zeta}|,0)^{-2} . \]
Bounding the normalized trace by the operator norm and applying the above estimates 
\begin{align*}
& (X-\zeta_1)^{-1}(X^*-\ol\zeta_2)^{-1} -\bs{E}_3^*\bs{G}^{\zeta}_\alpha(0)\1\bs{E}_1/\alpha   \\
&=(X-\zeta_1)^{-1}  (J - I) J^{-1}  (X^*-\ol\zeta_2)^{-1}   \\
&+ (X-\zeta_1)^{-1}   \alpha^2  \left( \alpha^2+(X-\zeta_1)(X^*-\ol\zeta_1) \right)^{-1} 
 J^{-1} (X^*-\ol\zeta_2)^{-1}   
\end{align*}
yields the desired bound for the resolvent on the event $ \spec |\bs{H}_\alpha^\zeta |\subset [\kappa \Delta_{\zeta}^2 /2, \infty)$.

\end{proof}

\section{Proof of Theorem \ref{thm:heat} and \ref{thm:stationary}}
\label{sec:mainresult}

In this section we consider differential equations
\[
\partial_t u_t \,=\, -u_t + gX u_t
\]
with $0 < g \leq 1$ and initial value $u_0$ distributed uniformly on the sphere, and 

\[
\dd  u_t \,=\, (-u_t + g X u_t) \dd t + \dd B_t
\]
where for each $t$, $B_t$ is a vector of standard Brownian motions, independent of $X$.

In the first equation we consider the behavior of the squared norm in the large $N$ limit, when averaged over initial conditions
\begin{equation} \label{time}
\E_{u_0}\2\norm{u_t}^2\,=\, \tr_N[\ee^{t(gX^*-I)}\ee^{t(gX-I)}].
\end{equation}
The case $g=1$ is of particular interest, as the damping term and coupling term are balanced.

Before continuing, we briefly consider the behavior when $g=1$ of such an expression if $X$ is replaced with $W$, a Wigner matrix, meaning $W$ is Hermitian and the entries on and above the diagonal are i.i.d. random variables with variance $1/(4N)$. Wigner's semicircle law asserts that the empirical spectral distribution is supported on $[-1,1]$ with density $( 2 /\pi) \sqrt{1-\abs{x}^2}$. 

In this case, as $W = W^*$ the limiting analogous expression for \eqref{time} can be expressed as an integral against the limit of the empirical density. At large times, the behavior decays like $t^{-3/2}$. Indeed, by the semicircle law, as $t \to \infty$, we have: 
\begin{align*}
&\lim_{N \to \infty} \tr[\ee^{t(W-I)}\ee^{t(W-I)}] = \frac{2}{\pi} \int_{-1}^{1} \ee^{2 t(x-1)}\sqrt{1-\abs{x}^2}\dd x\\
&\,=\, \sqrt{2}  \frac{2}{\pi} \int_{0}^{\infty} \ee^{-2tx}x^{1/2}\dd x + O(t^{-5/2}) \,=\, \frac{1}{2 \sqrt{\pi}} \frac{1}{t^{3/2}} + O(t^{-5/2}),
\end{align*}
where in the second line we do the change of variables $x \to 1-x$ and gain the error term by extending the bounds of integration to infinity.
 
From this calculation we see the asymptotic behavior of $u_t$ in the Hermitian case is governed by the behavior of its density at the edge. For random Wigner-type matrices with a variance profile the square root behavior at the edge is universal \cite{AjankiQVE}. Thus the $t^{-3/2}$ scaling is universal in the Hermitian case.

We now return to the non-Hermitian setting.
\begin{proof}[Proof of Theorem \ref{thm:heat}]
By Cauchy's theorem, 
\bes{ \tr_N[\ee^{t(gX^*-I)}\ee^{t(gX-I)}]  = \left( \frac{1}{2 \pi \ii} \right)^2 \oint_\gamma  \dd \zeta_1 \oint_{\ol{\gamma}} \dd \ol{\zeta}_2 \, \ee^{t(g\zeta_1+g\ol\zeta_2-2)} \tr_N[(X-\zeta_1)^{-1}(X^*-\ol\zeta_2)^{-1}]  ,}
where $\gamma$ is a circle that enclosed all the eigenvalues of $X$, traversed clockwise and $\ol{\gamma}$ is the same circle traversed counterclockwise. 

We apply Theorem \ref{thm:resolventconv} and then the decomposition in \eqref{derivdecomp}, computing each term separately. For $ t \leq N^\delta $ we have
\bes{ \tr_N[\ee^{t(gX^*-I)}\ee^{t(gX-I)}]  =  & \left( \frac{1}{2 \pi \ii} \right)^2  \oint_{\gamma} \dd \zeta_1\oint_{\ol{\gamma} }  \dd \ol{\zeta_2} \, \left[  \frac{\avg{v_l}\avg{v_r}}{\scalar{v_l}{v_r}}   \ee^{t  (g\zeta_1+g\ol\zeta_2-2)} \frac{1}{\zeta_1 \ol\zeta_2  -1}\,  \right. \\
& \left.+  \ee^{t g (\zeta_1+\ol\zeta_2-2)} 
\langle Q ( \zeta_1 \ol\zeta_2  -S )^{-1}Q \mathbbm{1} \rangle \right] + \epsilon_N\,.  }
where $|\epsilon_N | < N^{-\xi} $ with probability $1 - N^{-D}$ for any $D$.

The first term in the integral is:
\bes{
&\frac{1}{(2 \pi \ii)^2} \oint_{\gamma} \dd \zeta_1\oint_{\ol{\gamma} }  \dd \ol{\zeta_2} \, 
\,\ee^{t(g\zeta_1+g\ol\zeta_2-2)}\frac{1}{\zeta_1 \ol\zeta_2-1} \\ 
\,&=\frac{1}{(2 \pi \ii)^2} \oint_{\gamma} \dd \zeta_1\oint_{\ol{\gamma} }  \dd \ol{\zeta_2} \, 
\,\ee^{t(g\zeta_1+g\ol\zeta_2-2)}\frac{\zeta_1^{-1}}{\ol\zeta_2 -\zeta_1^{-1}}  \\ 
\,&=\, 
\frac{1}{2 \pi \ii} \oint_{\abs{\zeta_1}=1} \dd \zeta_1  
\,\ee^{t(g\zeta_1^{-1}+g\zeta_1-2)} \,\frac{1}{\zeta_1}  \\
\,&=\,
\frac{1}{2 \pi \ii}  \oint_{\abs{\zeta_1}=1} \dd \zeta_1 \frac{1}{\zeta_1}  
\,\ee^{2t(g\Re{\zeta_1}-1)}   \\
\,&=\,
J_0(2 g \ii t) \ee^{-2 t}
}
where $J_0$ is the Bessel function of the first kind.

For the second term in the integral we will use Lemma \ref{lem:Sbound} to deform the contour into the region with modulus less than one:

\bes{
 \,&\, \left| \frac{1}{(2 \pi \ii)^2} \oint_{\gamma} \dd \zeta_1\oint_{\ol{\gamma} }  \dd \ol{\zeta_2} \,
\,\ee^{t(g\zeta_1+g\ol\zeta_2-2)} \langle Q ( \zeta_1 \ol\zeta_2  -S )^{-1}Q \mathbbm{1} \rangle   \right| \\ 
\,&=\, 
\left| \frac{1}{(2 \pi \ii)^2} \oint_{|\zeta_1| = 1-\epsilon} \dd \zeta_1\oint_{|\ol{\zeta}_2| = 1-\epsilon }  \dd \ol{\zeta_2} \,
\,\ee^{t(g\zeta_1+g\ol\zeta_2-2)} \langle Q ( \zeta_1 \ol\zeta_2  -S )^{-1}Q \mathbbm{1} \rangle   \right| \\ 
\,&\lesssim \ee^{-2(1-g)t}\ee^{-cgt}, 
}
where we have used that all the poles of the integrand lie in the disk centered at the origin of radius $1- 2 \epsilon$ and that $| \ee^{t(g\zeta_1+g\ol\zeta_2-2)} | \lesssim \ee^{-2(1-g -g\epsilon) t}$, where $\epsilon$ is obtained in Lemma \ref{lem:Sbound}.

\end{proof}

We now turn to the differential equation in stationarity and compute the autocorrelation function.
\begin{proof}[Proof of Theorem \ref{thm:stationary}]

The following computation of $R(0)$ shows that the solution \eqref{ODEstat} has finite variance with respect to $B_t$ for large $N$ with high probability taken with respect to $X$. 

The autocorrelation of this solution is
\bes{ 
R(\tau) &=   \E_{B}[\langle x_t, x_{t+\tau}  \rangle ] \\
&= \int_{-\infty}^t   \tr_N(\ee^{(-1+g X)(t+\tau-s)} 
 \ee^{(-1+g X^*)(t-s)}       ) \, \dd s \\
&=\int_0^\infty \ee^{-2 u - \tau} \tr_N(\ee^{gX(u+\tau)} \ee^{gX^*u} ) \, \dd u   
}
which by Theorem \ref{analyticfunc} and Cauchy's theorem is 
\[ R(\tau) = \left( \frac{1}{2 \pi \ii} \right)^2 \frac{\avg{v_l}\avg{v_r}}{\scalar{v_l}{v_r}}  \int_0^\infty \ee^{-2 u - \tau}  
 \oint_{\gamma} \dd \zeta_1\oint_{\ol{\gamma} }  \dd \ol{\zeta_2} \,
\ee^{g\zeta_1(u+\tau)} \ee^{g\ol{\zeta_2} u} \langle (\zeta_1 \ol{\zeta_2} - S)^{-1} \mathbbm{1} \rangle  \dd u  + \epsilon_N \]
where $|\epsilon_N| \leq N^{-\xi} $ for $\tau \leq N^{-\delta}$.

Once again we apply the decomposition in \eqref{derivdecomp}. The first term is
\bes{
&   \int_0^\infty \ee^{-2 u - \tau} \frac{1}{(2 \pi \ii)^2} \oint_{\gamma} \dd \zeta_1\oint_{\ol{\gamma} }  \dd \ol{\zeta_2} \, 
\,\ee^{g\zeta_1(u+\tau)} \ee^{g\ol{\zeta_2}u}  \frac{1}{\zeta_1 \ol\zeta_2-1} \dd u \\ 
& =  \int_0^\infty  \frac{1}{(2 \pi \ii)} \oint_{\gamma} \dd \zeta_1 \, 
\,\ee^{g\zeta_1(u+\tau)+ g \zeta_1^{-1}u -2 u - \tau }  \zeta_1^{-1} \dd u \\ 
& =  \frac{1}{(2 \pi \ii)} \oint_{\gamma} \dd \zeta_1 \, 
\,\frac{1}{g\zeta_1^2 -2 \zeta_1 +g} \ee^{g\zeta_1\tau - \tau} \,  \dd u ,\\ 
}
and by factoring the denominator
\[  g\zeta_1^2 -2 \zeta_1 +g = g \left(\zeta_1 - \frac{1 + \sqrt{1 -  g^2 } }{ g}\right)\left(\zeta_1 - \frac{1 - \sqrt{1 -  g^2 } }{ g}\right)  \]
we have 
\bes{ \frac{1}{(2 \pi \ii)} \oint_{\gamma} \dd \zeta_1 \, 
\,\frac{1}{g\zeta_1^2 -2 \zeta_1 +g} \ee^{g\zeta_1\tau - \tau} \,  \dd u   =   
\,\frac{1}{ 2\sqrt{1 -  g^2 } } \ee^{  - \left(\sqrt{1 -  g^2 }  \right)\tau }  . \\ 
}

The subleading term is bounded as before, giving faster exponential decay.  
\end{proof}

\appendix 
\section{Inverse Lemma}

\begin{lemma} \label{lem:Sbound} 
Let $S$ satisfy Assumption \ref{assum:randmat}. Let $v_l$ and $v_r$ denote the left and right eigenvectors respectively. Then $Q(S-z)^{-1}Q$ has bounded norm for $z$ larger than $1-\epsilon$ for some small $\epsilon$, depending only on the model parameters.

\end{lemma}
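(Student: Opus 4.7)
The plan is to establish a uniform spectral gap between the Perron--Frobenius eigenvalue $1$ and the rest of the spectrum of $S$, and then bound the resolvent on the complementary invariant subspace by an absolutely convergent Neumann series. The key technical estimate is $\norm{S^k Q}_{2 \to 2} \le C r^k$ uniformly in $N$ for some $r < 1$ depending only on the model parameters.

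I begin by conjugating $S$ to a row-stochastic matrix. Since $S v_r = v_r$, with $D_r := \diag(v_r)$ the matrix $P := D_r^{-1} S D_r$ is row-stochastic. The Perron--Frobenius eigenvectors have entries bounded above and below by constants depending only on the model parameters (cf.\ Remark \ref{Seigenvector}), so the primitivity condition $[S^K]_{ij} \ge \kappa_1/N$ transfers to $[P^K]_{ij} \ge c/N$ for some $c > 0$. This is Doeblin's condition, which yields
\[
\norm{P^k - \mathbbm{1} \pi^T}_{\infty \to \infty} \,\le\, C r^k, \qquad r := (1-c)^{1/K},
\]
uniformly in $N$, where $\pi_j = v_l(j) v_r(j)/\sum_i v_l(i) v_r(i)$ is the stationary distribution of $P$.

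Next, the spectral projection $P_1 = v_r \scalar{v_l}{\genarg}/\scalar{v_l}{v_r}$ equals $D_r \mathbbm{1} \pi^T D_r^{-1}$, and since $S^k P_1 = P_1$, the similarity relation gives $S^k Q = S^k - P_1 = D_r (P^k - \mathbbm{1} \pi^T) D_r^{-1}$. The uniform bounds on $v_r$ then yield $\norm{S^k Q}_{\infty \to \infty} \le C r^k$. Repeating the argument for $S^*$ with $D_l := \diag(v_l)$, exploiting that $D_l^{-1} S^* D_l$ is row-stochastic because $S^* v_l = v_l$, gives the dual estimate $\norm{S^k Q}_{1 \to 1} \le C r^k$. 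Schur's test then produces the $\ell^2$ bound
\[
\norm{S^k Q}_{2 \to 2} \,\le\, \sqrt{\norm{S^k Q}_{\infty \to \infty} \norm{S^k Q}_{1 \to 1}} \,\le\, C r^k
\]
uniformly in $N$. Since $S$ commutes with $Q$, the Neumann series $Q(S - z)^{-1} Q = -\sum_{k \ge 0} S^k Q / z^{k+1}$ converges absolutely in operator norm for $\abs{z} > r$ and is bounded by $C/(\abs{z} - r)$; choosing any $\epsilon < 1 - r$ completes the proof.

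The main obstacle is converting the pointwise Doeblin contraction into an $\ell^2$ operator norm bound without a dimensional loss: a naive translation via $\norm{f}_2 \le \sqrt{N}\, \norm{f}_\infty$ would incur a factor of $\sqrt{N}$, destroying uniformity in $N$. The remedy is to apply Doeblin twice --- via the right eigenvector conjugation to obtain the $\ell^\infty$ bound, and via the left eigenvector conjugation (i.e.\ the right eigenvector conjugation for $S^*$) to obtain the $\ell^1$ bound --- and then interpolate via Schur's test, which is precisely the tool that avoids the $\sqrt{N}$ penalty.
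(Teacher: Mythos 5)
Your proof is correct and takes a genuinely different route from the paper's. The paper symmetrizes: it conjugates $S$ by $\diag\sqrt{v_l/v_r}$ to obtain $A$ with $x=\sqrt{v_lv_r}$ as a common left and right Perron eigenvector, and then works with the Hermitian block matrix $\left(\begin{smallmatrix}0 & A^*\\ A & 0\end{smallmatrix}\right)$, whose spectral gap (uniformly $\sim 1$, via the primitivity of $S^*S$) bounds the \emph{singular values} of $A$ restricted to $x^\perp$ below $1$; since singular values directly control the $\ell^2$ operator norm, a Neumann series then yields the resolvent bound. You instead conjugate $S$ to a row-stochastic matrix, invoke Doeblin's contraction to get an $\ell^\infty\to\ell^\infty$ geometric decay of $S^kQ$, repeat for $S^*$ to get the $\ell^1\to\ell^1$ bound, and interpolate by Schur's test to land in $\ell^2$. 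Both arguments put the Perron--Frobenius eigenvector to work twice, but in different ways: the paper uses it once to find a common left/right eigenvector so that the Hermitization has the eigenvalue-$1$ eigenvector explicitly, whereas you use $v_r$ for the $\ell^\infty$ side and $v_l$ for the $\ell^1$ side. A minor contrast in hypotheses: the paper invokes the primitivity of $(S^*S)^K$ from Assumption \ref{assum:randmat}(1), while your argument only needs primitivity of $S^K$ (which transfers to $(S^*)^K$ by transposition), so in principle you use slightly less. Your diagnosis of the main difficulty --- avoiding the $\sqrt N$ penalty when passing from an entrywise/row-sum estimate to a spectral-norm estimate --- is accurate, and Schur's test is exactly the right remedy; the paper sidesteps it altogether by working in $\ell^2$ from the start through the Hermitized block.
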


\begin{proof}

We will now prove that there exists $\eps \sim 1$ such that
\[
\sup_{\abs{z} \ge 1-2\eps}\normb{(QSQ-z)^{-1}} \,\lesssim\, 1\,.
\]
In fact we will prove that 
\bels{resolvent bound}{
\sup \{\normb{(S-z)^{-1}}:\; z \not \in D_{1-2\eps}(0) \cup D_{\eps}(1)\} \,\lesssim\, C_\epsilon\,. 
}
for all sufficiently small $\epsilon > 0$. In particular, $\epsilon$ can be chosen so that $\spec(S) \cap D_\epsilon(1) = \{1 \}$.
First we extend the resolvent to include the resolvent of $S^*$ (which may only increase the bound). Then we symmetrize
\[
\mtwo{S & 0}{  0 & S^*}- z \mtwo{1 & 0}{  0 & 1}\,=\, T^{-1} \pbb{\mtwo{0 & 1}{1 & 0}
\mtwo{0 & A^*}{A& 0}-z \mtwo{1 & 0}{  0 & 1}}T\,,
\]
where 
\[
T \,:=\, \mtwo{\sqrt{\frac{v_l}{v_r}}& 0}{0 & \sqrt{\frac{v_r}{v_l}}} \,, \qquad  A\,:=\, \sqrt{\frac{v_l}{v_r}}S\sqrt{\frac{v_r}{v_l}}\,.
\]
Since $\norm{T}\sim\norm{T^{-1}}\sim 1$ we have
\[
\normb{(S-z)^{-1}}\,\lesssim\,  \normbb{\pbb{\mtwo{0 & 1}{1 & 0}
\mtwo{0 & A^*}{A& 0}-z \mtwo{1 & 0}{  0 & 1}}^{-1}}\,.
\]
Note that with $x=\sqrt{v_lv_r}$ we have $Ax=A^*x=x$. In particular, 
\[
\avg{(x,x),(x,-x)}^\perp=\avg{(x,0),(0,x)}^\perp=\{(q_1,q_2): q_1,q_2 \perp x \}
\]
is invariant under both
\[
\mtwo{0 & 1}{1 & 0} \qquad \text{and} \qquad \mtwo{0 & A^*}{A& 0}\,.
\]
We conclude that 
\[
\mtwo{0 & 1}{1 & 0}
\mtwo{0 & A^*}{A& 0}
\,=\, \vtwo{x}{x} \otimes  \vtwo{x}{x} + \vtwo{x}{-x}\otimes \vtwo{x}{-x} + P  \mtwo{0 & 1}{1 & 0}P
\mtwo{0 & A^*}{A& 0}P\,,
\]
with $P$ the orthogonal projection onto $\avg{(x,0),(0,x)}^\perp$. Using the spectral gap of the matrix
\[
\spec \mtwo{0 & A^*}{A& 0} \,=\, \{-1\} \cup[-1+3\eps,1-3\eps] \cup\{1\}\,,
\]
(see, for instance, Lemma 5.6 in \cite{AjankiCPAM} applied to $(A^*A)^K$) for some $\eps \sim 1$ implies \eqref{resolvent bound}. Here we used the uniform primitivity of $S^*S$ and that $v_r\sim v_l\sim 1$. 

\end{proof}

\section{Implicit Function Theorem}

\begin{lemma}[IFT] \label{IFT}
Suppose $\C^A$ and $\C^B$ are equipped with norms which are both denoted by $\|\mspace{1mu}\cdot \mspace{1mu}\|$, and let the linear operators mapping between these spaces be equipped by the induced operator norms. Let $\eps_A, \eps_B>0$, $a_0\in \C^A$, $b_0 \in \C^B$ and
 $T:B_{\eps_A}^A(a_0) \times B_{\eps_B}^B(b_0)\to \C^A$ be a continuously differentiable function with invertible derivative $\nabla^AT(a_0,b_0)$ with respect to the first argument at the origin and $T(a_0,b_0)=0$. Suppose that 
 \begin{equation} \label{eq:IFT}
\sup_{a,b} \| \mathrm{Id}_{\C^A}-(\nabla^AT(a_0,b_0))^{-1}\nabla^AT(a,b)  \|\,\le\, \frac{1}{2}\,,\quad 
2\mspace{1mu}\eps_B\|(\nabla^AT(a_0,b_0))^{-1} \| \sup_{a,b}\|\nabla^BT(a,b)\|\,\le\, \eps_A\,,
 \end{equation}
 where the supremum is taken over $(a,b) \in B_{\eps_A}^A(a_0) \times B_{\eps_B}^B(b_0)$.\\
Then there is a unique function $f: B_{\eps_B}^B(b_0) \to B_{\eps_A}^A(a_0)$ such that
\[
T(f(b),b)\,=\, 0\,, \qquad b \in B_{\eps_B}^B(b_0)\,.
\]
The function $f$ is continuously differentiable. If $T$ is analytic then so is $f$.
\end{lemma}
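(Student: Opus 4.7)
The plan is to reduce the existence claim to the Banach fixed-point theorem, then obtain regularity from the standard differentiation argument. For each $b \in B_{\eps_B}^B(b_0)$, I would introduce the auxiliary map
\[
\Phi_b(a) \,:=\, a \,-\, \pb{\nabla^A T(a_0,b_0)}^{-1} T(a,b),
\]
defined on $B_{\eps_A}^A(a_0)$. A zero of $T(\cdot,b)$ is precisely a fixed point of $\Phi_b$, so it suffices to show $\Phi_b$ is a strict contraction from $B_{\eps_A}^A(a_0)$ into itself.

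First I would estimate the Lipschitz constant of $\Phi_b$ by computing
\[
\nabla^A \Phi_b(a) \,=\, \mathrm{Id}_{\C^A} \,-\, \pb{\nabla^A T(a_0,b_0)}^{-1}\nabla^A T(a,b),
\]
whose norm is at most $1/2$ by the first hypothesis in \eqref{eq:IFT}. The mean value inequality then gives $\|\Phi_b(a)-\Phi_b(a')\| \le \tfrac12\|a-a'\|$. Next I would check $\Phi_b$ maps $B_{\eps_A}^A(a_0)$ into itself. Using $T(a_0,b_0)=0$ together with the mean value inequality in the second variable,
\[
\|\Phi_b(a_0)-a_0\| \,\le\, \normb{\pb{\nabla^A T(a_0,b_0)}^{-1}}\,\sup \norm{\nabla^B T}\,\|b-b_0\| \,\le\, \eps_A/2,
\]
by the second hypothesis in \eqref{eq:IFT}. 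Combined with the contraction estimate, the closed ball $\ol{B_{\eps_A}^A}(a_0)$ is mapped into itself, so Banach's fixed-point theorem produces a unique $f(b) \in B_{\eps_A}^A(a_0)$ with $T(f(b),b)=0$.

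For the regularity statement I would follow the usual bootstrap. Continuity of $f$ follows from continuity of $\Phi_b$ jointly in $(a,b)$ and the uniform contraction constant. Since $\nabla^A T(a,b)$ is a small perturbation of $\nabla^A T(a_0,b_0)$ on the domain (the first hypothesis together with a Neumann-series argument shows $\nabla^A T(a,b)$ is invertible with uniformly bounded inverse there), one can rearrange the relation $T(f(b),b)=0$ to produce the candidate derivative
\[
\nabla f(b) \,=\, -\pb{\nabla^A T(f(b),b)}^{-1}\nabla^B T(f(b),b),
\]
and verify it is indeed the Fréchet derivative by a standard difference-quotient estimate, using Lipschitz continuity of $f$. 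If $T$ is analytic, the power-series form of the contraction mapping principle (or equivalently the Cauchy–Riemann conditions applied to the formula above) upgrades $f$ to an analytic function.

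The only mildly delicate point is ensuring the implicit function theorem is applied with the \emph{fixed} linearization $\nabla^A T(a_0,b_0)$ rather than $\nabla^A T(a,b)$ itself; this is precisely what the first hypothesis in \eqref{eq:IFT} is designed to allow. Everything else is a direct application of the standard Banach-space implicit function theorem, so I do not expect any real obstacle.
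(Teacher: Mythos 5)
The paper states Lemma~\ref{IFT} in the appendix without proof, so there is no internal argument to compare against. Your proposal is the textbook Banach fixed-point route (reduce $T(a,b)=0$ to a fixed point of $\Phi_b(a)=a-(\nabla^A T(a_0,b_0))^{-1}T(a,b)$, use the first hypothesis to get a contraction constant $\le 1/2$, use the second hypothesis with $T(a_0,b_0)=0$ and the mean value inequality to get $\|\Phi_b(a_0)-a_0\|\le\eps_A/2$, then bootstrap for $C^1$ and analyticity), and it is correct and complete in its essentials.

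One small point worth tightening: you apply the Banach fixed-point theorem on the \emph{closed} ball $\ol{B_{\eps_A}^A}(a_0)$, but the hypotheses only place $T$ (and hence $\Phi_b$) on the open ball $B_{\eps_A}^A(a_0)\times B_{\eps_B}^B(b_0)$. Either restrict to a closed ball of slightly smaller radius and pass to the limit, or argue directly by Picard iteration from $a_0$: since $\|b-b_0\|<\eps_B$ gives $\|\Phi_b(a_0)-a_0\|<\eps_A/2$ strictly, the geometric telescoping $\|a_n-a_0\|\le\sum_{k<n}2^{-k}\|a_1-a_0\|<\eps_A$ keeps all iterates strictly inside the open ball and the Cauchy limit lands there too, giving existence and uniqueness without ever leaving the stated domain. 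This is a minor finish, not a real gap. The remaining regularity and analyticity steps (uniform invertibility of $\nabla^A T$ on the domain via Neumann series, the explicit formula for $\nabla f$, and uniform convergence of analytic iterates) are standard and fine as sketched.
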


\bibliographystyle{abbrv}
\bibliography{2pt}

\end{document}